\newtheorem{remark}{Remark}
 \newtheorem{definition}{Definition}
 \newtheorem{conj}[remark]{Conjecture}
 \newtheorem{lemma}[remark]{Lemma}
 \newtheorem{theorem}[remark]{Theorem}
 \newtheorem{proposition}[remark]{Proposition}
\newtheorem{problem}[remark]{Problem}
\newcommand{\ld}{\left|}
\newcommand{\rd}{\right|}
\newcommand{\la}{\lambda}
\newcommand{\li}{{\cal L}}
\newcommand{\s}{{\cal S}}
\newcommand{\x}{{\bf x}}
\newcommand{\uu}{{\bf u}}
\newcommand{\vv}{{\bf v}}
\newcommand{\rk}{{\rm rank}}
\newcommand{\E}{{\cal E}}
\newcommand{\Cay}{\mathrm{Cay}}
\newcommand{\SP}{\mathcal{SP}}
\newcommand{\NC}{\mathcal{NC}}
\newcommand{\gen}[1]{\langle#1\rangle}
\tikzstyle{vertex}=[circle, draw, inner sep=0pt, minimum size=6pt]
\newcommand{\vertex}{\node[vertex]}
\renewcommand{\thefootnote}
\title{Some Algebraic Properties of Sierpi\'nski-Type Graphs}
\author{ M. Farrokhi D. G.$^{\rm 1}$, E. Ghorbani$^{\,\rm 2,}$\thanks{Corresponding author}, H. R. Maimani$^{\,\rm 3}$,  F. Rahimi Mahid$^{\,\rm 3}$\\[.4cm]
{\small\sl $^{\rm 1}$Department of Mathematics, Institute for Advanced Studies in Basic Sciences (IASBS),}\\
{\small\sl and the Center for Research in Basic Sciences and Contemporary Technologies, IASBS,}\\
{\small\sl P. O. Box 45137-66731, Zanjan,  Iran}\\
{\small\sl $^{\rm 2}$Department of Mathematics, K. N. Toosi University of Technology,}\\
{\small\sl P. O. Box 16765-3381, Tehran, Iran}\\
{\small\sl $^{\rm 3}$Department of Basic Sciences, Shahid Rajaee Teacher Training University,}\\
{\small\sl P. O. Box 16783-163, Tehran, Iran}}
\begin{document}
\maketitle
\footnotetext{{\em E-mail Addresses}:  {\tt farrokhi@iasbs.ac.ir, m.farrokhi.d.g@gmail.com} (M. Farrokhi D. G.), {\tt e\_ghorbani@ipm.ir} (E. Ghorbani),
{\tt maimani@ipm.ir} (H. R. Maimani),  {\tt farhad.rahimi@sru.ac.ir} (F. Rahimi Mahid)}

\begin{abstract}
This paper deals with some of the algebraic properties of Sierpi\'nski graphs and a family of regular generalized Sierpi\'nski graphs.
For the family of regular generalized Sierpi\'nski graphs, we obtain their spectrum and characterize those graphs that are Cayley graphs.
As a by-product, a new family of non-Cayley vertex-transitive graphs, and consequently, a new set of non-Cayley numbers are introduced. We also obtain the Laplacian spectrum of Sierpi\'nski graphs in some particular cases, and make a conjecture on the general case.

\vspace{5mm}
\noindent {\bf Keywords:} Sierpi\'nski Graph,  Spectrum,  Laplacian, Cayley graph, Non-Cayley number  \\[.1cm]
\noindent {\bf AMS Mathematics Subject Classification\,(2010):}   05C50, 05C25, 05C75
\end{abstract}

\section{Introduction}
Sierpi\'nski-type graphs show up in a wide range of areas; for instance, physics, dynamical systems, probability, and topology, to name a few.
The Sierpi\'nski gasket graphs form one of the most significant families of such graphs that are obtained by a finite number of iterations that give the Sierpi\'nski gasket in the limit. Several more families of Sierpi\'nski-type graphs have been introduced and studied in the literature (see Barri\`{e}re, Comellas, and Dalf\'o \cite{bcd} and Hinz, Klav\v{z}ar, and Zemlji\v{c} \cite{hkz}). In this paper, we deal with two families of them, as described below.

For positive integers $n,k$, the {\em Sierpi\'nski graph} $S(n, k)$ is defined with vertex set $[k]^n$, where $[k]:=\{1,\ldots,k\}$, and two different vertices $(u_1, \ldots , u_n)$ and $(v_1, \ldots , v_n)$ are adjacent if and only if there exists a $t\in[n]$ such that
\begin{itemize}
  \item $u_i=v_i$ for $i=1,\ldots,t-1$,
  \item $u_t\ne v_t$,
  \item $u_j=v_t$ and $v_j=u_t$ for $j=t+1,\ldots,n$.
\end{itemize}
For instance, the graphs $S(3,3)$ and $S(2,4)$ are depicted in Figure~\ref{fig:S}.
\begin{figure}[h!]
\centering
\begin{tikzpicture}[scale=.65]
\vertex[fill] (a1) at (0,0) [label=below:{\small $222$}]{};
\vertex [fill](a2) at (1.2,0) [label=below:{\small $223$}]{};
\vertex [fill](a3) at (2.4,0) [label=below:{\small $232$}]{};
\vertex [fill](a4) at (3.6,0) [label=below:{\small $233$}]{};
\vertex [fill](a5) at (4.8,0) [label=below:{\small $322$}]{};
\vertex [fill](a6) at (6,0) [label=below:{\small $323$}]{};
\vertex [fill](a7) at (7.2,0) [label=below:{\small $332$}]{};
\vertex [fill](a8) at (8.4,0) [label=below:{\small $333$}]{};
\vertex [fill](b1) at (.6,1.0392304845) [label=left:{\small $221$}]{};
\vertex [fill](b2) at (3,1.0392304845) [label={[shift={(.44,-.3)}]{\small $231$}}]{};
\vertex [fill](b3) at (5.4,1.0392304845) [label={[shift={(-.4,-.3)}]{\small $321$}}]{};
\vertex [fill](b4) at (7.8,1.0392304845) [label= right:{\small $331$}]{};
\vertex[fill] (e1) at (1.2,2.078460969) [label=left:{\small $212$}]{};
\vertex [fill](e2) at (2.4,2.078460969) [label=right:{\small $213$}]{};
\vertex [fill](e3) at (6,2.078460969) [label=left:{\small $312$}]{};
\vertex[fill] (e4) at (7.2,2.078460969) [label=right:{\small $313$}]{};
\vertex [fill](f1) at (1.8,3.1176914535) [label= left:{\small $211$}]{};
\vertex[fill] (f2) at (6.6,3.1176914535) [label=right:{\small $311$}]{};
\vertex [fill](g1) at (2.4,4.156921938) [label=left:{\small $122$}]{};
\vertex[fill] (g2) at (3.6,4.156921938) [label=below:{\small $123$}]{};
\vertex [fill](g3) at (4.8,4.156921938) [label=below:{\small $132$}]{};
\vertex[fill] (g4) at (6,4.156921938) [label= right:{\small $133$}]{};
\vertex [fill](h1) at (3,5.1961524225) [label= left:{\small $121$}]{};
\vertex[fill] (h2) at (5.4,5.1961524225) [label= right:{\small $131$}]{};
\vertex[fill] (i1) at (3.6,6.235382907) [label=left:{\small $112$}]{};
\vertex [fill](i2) at (4.8,6.235382907) [label=right:{\small $113$}]{};
\vertex[fill] (j1) at (4.2,7.2746133915) [label=above:{\small $111$}]{};
\path
(a1) edge (a8)
(j1) edge (a1)
(b1) edge (a2)
(b2) edge (a3)
(b2) edge (a4)
(b3) edge (a5)
(b3) edge (a6)
(b4) edge (a7)
(b4) edge (a8)
(j1) edge (a8)
(g1) edge (g4)
(e1) edge (e2)
(e3) edge (e4)
(e2) edge (b2)
(e3) edge (b3)
(f1) edge (e2)
(f2) edge (e3)
(h1) edge (g2)
(h2) edge (g3)
(i2) edge (i1);
\end{tikzpicture}~~~~
\begin{tikzpicture}
\vertex[fill] (a1) at (0,0) [label=below:$44$]{};
\vertex [fill](a2) at (1.5,0) [label=below:$43$]{};
\vertex [fill](a3) at (3,0) [label=below:$34$]{};
\vertex [fill](a4) at (4.5,0) [label=below:$33$]{};
\vertex [fill](b1) at (0,1.5) [label=left:$41$]{};
\vertex [fill](b2) at (1.5,1.5) [label=above:$42$]{};
\vertex [fill](b3) at (3,1.5) [label=above:$31$]{};
\vertex [fill](b4) at (4.5,1.5) [label=right:$32$]{};
\vertex [fill](e1) at (0,3) [label=left:$14$]{};
\vertex [fill](e2) at (1.5,3) [label= below:$13$]{};
\vertex [fill](e3) at (3,3) [label= below:$24$]{};
\vertex [fill](e4) at (4.5,3) [label= right:$23$]{};
\vertex[fill] (f1) at (0,4.5) [label=left:$11$]{};
\vertex [fill](f2) at (1.5,4.5) [label=above:$12$]{};
\vertex [fill](f3) at (3,4.5) [label=above:$21$]{};
\vertex[fill] (f4) at (4.5,4.5) [label=right:$22$]{};
\path
(a1) edge (a4)
(a1) edge (f1)
(f1) edge (f4)
(f4) edge (a4)
(f1) edge (a4)
(a1) edge (f4)
(b1) edge (a2)
(b1) edge (b2)
(b2) edge (a2)
(b3) edge (a3)
(b3) edge (b4)
(e1) edge (e2)
(e2) edge (f2)
(e1) edge (f2)
(e3) edge (f3)
(f3) edge (e4)
(a3) edge (b4)
(e3) edge (e4);
\end{tikzpicture}
  \caption{The Sierpi\'nski graphs $S(3,3)$ (left) and $S(2,4)$ (right)}\label{fig:S}
\end{figure}
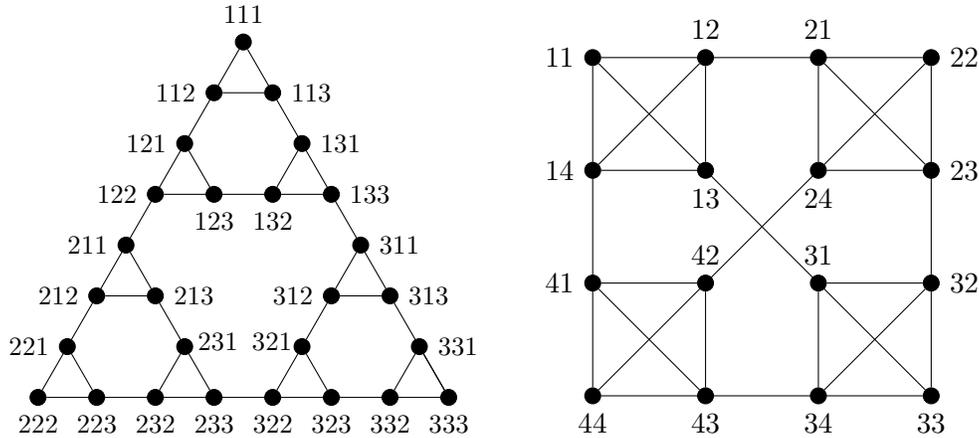
Sierpin\'ski graphs $S(n,k)$ were introduced in Klav\v{z}ar and Milutinovi\'c \cite{kmi}.
The graph $S(n,3)$ is indeed isomorphic to the graph of the Tower
of Hanoi with $n$ disks. The graph $S(n,k)$ has $k^n-k$ vertices of degree $k$ and $k$ vertices of degree $k-1$ that are $(i,\ldots,i)$ for $i\in[k]$.
 These vertices are called the {\em extreme vertices}.

In addition to $S(n,k)$, we consider a `regularization' of them as another
 family of Sierpi\'nski-type graphs.  The graphs $S^{++}(n, k)$, introduced in Klav\v{z}ar and Mohar \cite{km}, are defined as follows. The graph
$S^{++}(1, k)$  is the complete graph $K_{k+1}$. For $n\ge2$, $S^{++}(n, k)$ is the graph obtained
from the disjoint union of $k+1$ copies of $S(n-1, k)$ in which the extreme vertices in
distinct copies of $S(n - 1, k)$ are connected as the complete graph $K_{k+1}$. See Figure~\ref{fig:S++} for an illustration of $S^{++}(3,3)$.
\begin{figure}[h!]
\centering
\begin{tikzpicture}[scale=.55]
\vertex[fill] (a1) at (1.5,0) [label=below:]{};
\vertex [fill](a2) at (9.5,0) [label=below:]{};
\vertex [fill](a3) at (1,0.8660254038) [label=below:]{};
\vertex [fill](a4) at (2,0.8660254038) [label=below:]{};
\vertex [fill](a5) at (9,0.8660254038) [label=below:]{};
\vertex [fill](a6) at (10,0.8660254038) [label=below:]{};
\vertex [fill](a7) at (0.5,1.7320508076) [label=below:]{};
\vertex [fill](a8) at (2.5,1.7320508076) [label=below:]{};
\vertex [fill](b1) at (8.5,1.7320508076) [label=left:]{};
\vertex [fill](b2) at (10.5,1.7320508076) [label= right:]{};
\vertex [fill](b3) at (0,2.5980762114) [label= left:]{};
\vertex [fill](b4) at (1,2.5980762114) [label= right:]{};
\vertex[fill] (e1) at (2,2.5980762114) [label=left:]{};
\vertex [fill](e2) at (3,2.5980762114) [label=right:]{};
\vertex [fill](g1) at (8,2.5980762114) [label=left:]{};
\vertex[fill] (g2) at (9,2.5980762114) [label=below:]{};
\vertex [fill](g3) at (10,2.5980762114) [label=below:]{};
\vertex[fill] (g4) at (11,2.5980762114) [label= right:]{};
\vertex [fill](h1) at (4,3.5980762114) [label= left:]{};
\vertex[fill] (h2) at (5,3.5980762114) [label= right:]{};
\vertex[fill] (i1) at (6,3.5980762114) [label=left:]{};
\vertex [fill](i2) at (7,3.5980762114) [label=right:]{};
\vertex[fill] (j1) at (4.5,4.4641016152) [label=above:]{};
\vertex [fill](p1) at (6.5,4.4641016152) [label=left:]{};
\vertex[fill] (p2) at (5,5.330127019) [label=below:]{};
\vertex [fill](p3) at (6,5.330127019) [label=below:]{};
\vertex[fill] (p4) at (5.5,6.1961524228) [label= right:]{};
\vertex [fill](q1) at (5.5,7.6103659852) [label= left:]{};
\vertex[fill] (q2) at (5,8.476391389) [label= right:]{};
\vertex[fill] (w1) at (6,8.476391389) [label=left:]{};
\vertex [fill](w2) at (4.5,9.3424167928) [label=right:]{};
\vertex [fill](e3) at (6.5,9.3424167928) [label=left:]{};
\vertex[fill] (e4) at (4,10.2084421966) [label=right:]{};
\vertex [fill](f1) at (5,10.2084421966) [label= left:]{};
\vertex[fill] (f2) at (6,10.2084421966) [label=right:]{};
\vertex[fill] (r1) at (7,10.2084421966) [label=above:]{};
\path
(a1) edge (a2)
(a1) edge (a3)
(a1) edge (a4)
(a2) edge (a5)
(a2) edge (a6)
(a3) edge (a7)
(a4) edge (a8)
(a3) edge (a4)
(a5) edge (a6)
(a5) edge (b1)
(a6) edge (b2)
(a7) edge (b3)
(a7) edge (b4)
(b3) edge (b4)
(a8) edge (e1)
(a8) edge (e2)
(e1) edge (e2)
(b4) edge (e1)
(b1) edge (g1)
(b1) edge (g2)
(b2) edge (g3)
(b2) edge (g4)
(g1) edge (g2)
(g2) edge (g3)
(g3) edge (g4)
(e2) edge (h1)
(h1) edge (i2)
(h1) edge (p4)
(h2) edge (j1)
(i1) edge (p1)
(p2) edge (p3)
(p4) edge (q1)
(q1) edge (e4)
(q1) edge (r1)
(e4) edge (r1)
(q2) edge (w1)
(w2) edge (f1)
(e3) edge (f2)
(g1) edge (i2)
(b3) edge (e4)
(g4) edge (r1)
(i2) edge (p4);
\end{tikzpicture}
 \caption{The graph $S^{++}(3,3)$}\label{fig:S++}
\end{figure}
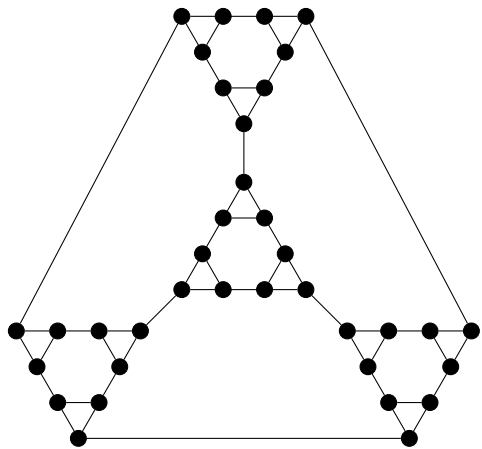

Many properties of Sierpi\'nski-type graphs, including those of $S(n,k)$ and  $S^{++}(n,k)$, have been studied in the literature, for a survey see Hinz, Klav\v{z}ar, and Zemlji\v{c} \cite{hkz}.
In this paper, we investigate some of the algebraic properties of the two families of graphs, namely the spectrum and the property of being a Cayley graph.
More precisely, in Section~\ref{sec:S++}, we determine the spectrum of the graphs $S^{++}(n,k)$.
The Laplacian spectrum of $S(n,k)$ is already known for $k=2,3$. We establish the case $n=2$, in Section~\ref{sec:S},
and make a conjecture on the Laplacian spectrum of $S(n,k)$ in general.
We also characterize the graphs  $S^{++}(n,k)$ that are Cayley graphs in Section~\ref{sec:Cayley}. As a by-product, a new family of non-Calyley vertex-transitive graphs are obtained. From this result, we conclude a new set of square-free non-Cayley numbers in Section~\ref{sec:NC}, and we discuss its distribution.

\section{Spectrum of $S^{++}(n,k)$}\label{sec:S++}

Let $\Gamma$ be a simple graph with vertex set $V(\Gamma)=\{v_1, \ldots ,v_n \}$  and edge set $E(\Gamma)$. Its {\em adjacency matrix} $A(\Gamma)=[a_{ij}]$ is an $n \times n$ symmetric matrix with $a_{ij}=1$ if $v_i$ and $v_j$ adjacent, and $a_{ij}=0$ otherwise.
 The multi-set of the eigenvalues of $A(\Gamma)$ is called the {\em spectrum} of $\Gamma$.

In this section, we determine the spectrum of $S^{++}(n,k)$. As we will see, the recursive structure of these Sierpi\'nski-type graphs also shows up in their spectrum.

We first recall some basic facts.

The {\em incidence matrix} of a graph $\Gamma$ is a 0-1 matrix $X(\Gamma)=[x_{ve}]$, with rows indexed by the vertices and columns
indexed by the edges of $\Gamma$, where  $x_{ve}=1$ if the vertex $v$ is an endpoint of the edge $e$.
For a graph $\Gamma$, $\li(\Gamma)$ denotes the {\em line graph} of $\Gamma$, in which $V(\li(\Gamma))$ corresponds with $E(\Gamma)$, and  two vertices of $\li(\Gamma)$  are adjacent if and only if they have a common vertex as edges of $\Gamma$. The subdivision graph $\s(\Gamma)$ of $\Gamma$ is the graph obtained by inserting a new vertex into every edge of $\Gamma$.
It is easy to verify that
\begin{equation}\label{eq:xtx}X(\Gamma)^\top X(\Gamma)= 2I+A(\li(\Gamma)),\end{equation}
and, moreover, if $\Gamma$ is $k$-regular, then
\begin{equation}\label{eq:xxt}X(\Gamma)X(\Gamma)^\top= kI+A(\Gamma).\end{equation}

The following lemma gives a recursive relation for the graphs $S^{++}(n,k)$.

\begin{lemma}\label{lineiso}
The graph $S^{++}(n+1,k)$ is isomorphic to $\li(\s(S^{++}(n,k)))$.
\end{lemma}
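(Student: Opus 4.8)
The plan is to unwind the definitions of $\s$ and $\li$ into an explicit combinatorial model of $\li(\s(\Gamma))$, and then match that model against the recursive definition of $S^{++}(n+1,k)$. First I would identify the vertices of $\li(\s(\Gamma))$ with the \emph{half-edges} of $\Gamma$, i.e. the incident pairs $(u,e)$ with $u\in e$. Indeed, since $\s(\Gamma)$ replaces each edge $e=\{u,v\}$ by the two edges $\{u,w_e\}$ and $\{v,w_e\}$ (with $w_e$ the inserted vertex), the edges of $\s(\Gamma)$ are exactly these pairs $(u,e)$, and two of them meet in $\s(\Gamma)$ precisely when they share the base vertex ($u=u'$, $e\neq e'$) or share the inserted vertex ($e=e'$, $u\neq u'$). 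Hence $\li(\s(\Gamma))$ is the \emph{clique-inserted graph} of $\Gamma$: each vertex $v$ expands into a clique on its half-edges, and each edge $e=\{u,v\}$ contributes one bridging link joining $(u,e)$ to $(v,e)$.

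Applying this with the $k$-regular graph $\Gamma=S^{++}(n,k)$, every vertex yields a copy of $K_k$ and every edge yields a single link, so $\li(\s(S^{++}(n,k)))$ has $\sum_v\deg(v)=(k+1)k^{n}$ vertices, which already agrees with $|V(S^{++}(n+1,k))|$. Next I would overlay the recursive structure of $S^{++}(n,k)$, namely the disjoint union of $k+1$ copies $D_0,\dots,D_k$ of $S(n-1,k)$ together with the inter-copy links joining the extreme vertices in the $K_{k+1}$ pattern. Grouping the half-edges into $k+1$ \emph{blocks}, where $B_i$ collects all half-edges based in $D_i$ (including the inter-copy half-edges sitting at the $k$ extreme vertices of $D_i$), a count gives $|B_i|=(k^{n}-k)+k=k^{n}=|V(S(n,k))|$. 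Within $B_i$ the clique-inserted structure sees only the intra-copy edges of $D_i$, since the inter-copy links leave the block.

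The core of the argument is the structural identification $B_i\cong S(n,k)$, under which the $k$ inter-copy half-edges of $B_i$ correspond to the $k$ extreme vertices of $S(n,k)$. I would establish this by induction on $n$, in the equivalent form that clique-inserting $S(n-1,k)$ and adjoining, at each extreme clique $K_{k-1}$, one apex vertex completing it to $K_k$, reproduces $S(n,k)$ with the apexes in the role of the extreme vertices; the base case $n=1$ is $\li(\s(K_{k+1}))\cong S^{++}(2,k)$, which one checks directly since each of the $k+1$ vertices of $K_{k+1}$ inflates to a $K_k$ and every pair of these is joined by one link. Granting the identification, the inter-copy links of $S^{++}(n,k)$ become exactly single edges between the extreme vertices of distinct blocks $B_i$, arranged in the same $K_{k+1}$ pattern in which $S^{++}(n+1,k)$ joins its $k+1$ copies of $S(n,k)$, which completes the isomorphism.

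The main obstacle is the block identification $B_i\cong S(n,k)$. It requires a compatible labeling of the half-edges of $S(n-1,k)$ by the coordinate strings of $S(n,k)$ so that both types of adjacency are carried correctly: the cliques (half-edges sharing a base vertex) must map to the correct local Sierpi\'nski cliques, the matching links (half-edges sharing an edge) must map to the Sierpi\'nski connecting edges, and one must track how each extreme-vertex apex inherits precisely the single extra edge that regularizes it. Once this labeling is fixed and verified to respect the recursion $S(n,k)\to S(n+1,k)$, assembling the $k+1$ blocks together with the inter-copy links yields $S^{++}(n+1,k)$, as claimed.
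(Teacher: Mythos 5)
Your setup is sound, and in substance it is the same half-edge viewpoint the paper uses: the paper likewise works with the two half-edges $e_\uu,e_\vv$ into which each edge $\{\uu,\vv\}$ of $\Gamma_n=S^{++}(n,k)$ splits in $\s(\Gamma_n)$, and your counts (the $(k+1)k^{n}$ half-edges, $|B_i|=k^{n}$) are correct, as is the reduction of the lemma to the block claim. The genuine gap is that your argument stops exactly where the work begins: the ``core identification'' $B_i\cong S(n,k)$ --- equivalently, that clique-inserting $S(n-1,k)$ and adjoining an apex to each extreme clique reproduces $S(n,k)$ with the apexes as extreme vertices --- carries the entire content of the lemma, and it is never established. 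You announce an induction but give no inductive step, and the base case you state does not even belong to that induction: $\li(\s(K_{k+1}))\cong S^{++}(2,k)$ is the $n=1$ instance of the lemma itself, whereas the claim you propose to induct on concerns the graphs $S(\cdot,k)$; its correct base case would be that clique-inserting $K_k=S(1,k)$ and adding apexes yields $S(2,k)$ (the claim cannot start at $n=1$, since that would involve the one-vertex graph $S(0,k)$, for which it fails). Closing with ``once this labeling is fixed and verified\ldots'' defers precisely the step that constitutes the proof.

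The gap is closable, and the paper shows the most economical route: exhibit the labeling explicitly in coordinates instead of inducting. For an edge $e=\{\uu,\vv\}$ with $\uu=(u_1,\ldots,u_r,u,v,\ldots,v)$ and $\vv=(u_1,\ldots,u_r,v,u,\ldots,u)$ (including the case where only the last coordinates differ), label the half-edge at $\uu$ by the string $(\uu,v)$ and the half-edge at $\vv$ by $(\vv,u)$; for a matching edge joining extreme vertices of two copies, append instead the repeated coordinate, giving $(\uu,u)$ and $(\vv,u)$. One then checks in a few lines that half-edges sharing a base vertex $\x$ map to strings $(\x,y)$, $(\x,z)$, which are adjacent, that half-edges sharing a subdivision vertex map to adjacent strings, and that the map is injective; equality of the vertex and edge counts then forces it to be an isomorphism. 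Alternatively, keep your induction but actually write the inductive step: distribute the clique-insertion of $S(n-1,k)$ over its $k$ sub-copies of $S(n-2,k)$, note that the half-edges of the connecting edges supply apexes at $k-1$ of the extreme cliques of each sub-copy while the externally added apex supplies the last one, and match the resulting bridges with the connecting edges of $S(n,k)$. Either way, the decisive verification must appear; as submitted, the proposal is a plan rather than a proof.
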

\begin{proof} Let $k$ be fixed.
The graph $\Gamma_n:=S^{++}(n,k)$ can be obtained by the union of $S(n,k)$ and $S(n-1,k)$ by adding a matching between the extreme vertices of the two graphs.
If we consider $\{ 0\}\times[k]^{n-1}$ as the vertex set of $S(n-1,k)$ (to make them compatible with the length $n$ of the vertices of $S(n,k)$),
then $\left(\{0\}\cup[k]\right)\times[k]^{n-1}$ is the vertex set of $\Gamma_n$.
It follows that any edge $e=\{\uu,\vv\}$ of $\Gamma_n$ is of one of the following types:
\begin{itemize}
  \item[(1)] $\uu=(u_1,\ldots,u_r,u,v,\ldots,v)$, $\vv=(u_1,\ldots,u_r,v,u,\ldots,u)$  for some  $r\le n-2$ and $u\ne v$;
  \item[(2)] $\uu=(u_1,\ldots,u_{n-1},u)$, $\vv=(u_1,\ldots,u_{n-1},v)$ with $u\ne v$;
  \item[(3)] $\uu=(0,u,\ldots,u)$, $\vv=(u,u,\ldots,u)$.
\end{itemize}
 Each $e=\{\uu,\vv\}\in E(\Gamma_n)$ is divided into two new edges $e_\uu$ and $e_\vv$  in $\s(\Gamma_n)$, where we assume that $\uu\in e_\uu$ and $\vv\in e_\vv$.
  We define a map $\psi:E(\s(\Gamma_n))\to\left(\{0\}\cup[k]\right)\times[k]^n$
  based on the type of $e$ as follows:
\begin{itemize}
  \item[(i)] If $e$ is of type (1) or (2), then  $\psi(e_\uu)=(\uu,v)$  and $\psi(e_\vv)=(\vv,u)$;
  \item[(ii)] If $e$ is of type (3), then  $\psi(e_\uu)=(\uu,u)$ and $\psi(e_\vv)=(\vv,u)$.
\end{itemize}
It is easily seen that $\psi$ is a one-to-one map.
We show that $\psi$ is an isomorphism from $\li(\s(\Gamma_n))$ to $\Gamma_{n+1}$.
Let $e$ and $e'$ be two edges that share a vertex $\x$ of $\s(\Gamma_n)$.
If $\x=(x_1,\ldots,x_n)$ is an `old' vertex of $\s(\Gamma_n)$, then $\psi(e)=(\x,y)$ and $\psi(e')=(\x,z)$ for some $y\ne z$.
Then, it is clear that $\psi(e)$ and $\psi(e')$ are adjacent in $\Gamma_{n+1}$.
If $\x$ is a `new' vertex of $\s(\Gamma_n)$, then from (i) and (ii), it is clear that $\psi(e)$ and $\psi(e')$ are adjacent in $\Gamma_{n+1}$.
This shows that $\psi$ is indeed a one-to-one homomorphism. As $\li(\s(\Gamma_n))$ and $\Gamma_{n+1}$ have the same number of edges, it follows that $\psi$ is an isomorphism.
\end{proof}

We recall that if $A$ is a non-singular square matrix, then
\begin{equation}\label{eq:det}\left|\begin{array}{cc}A&B\\C&D\end{array}\right|=|A|\cdot\left|D-CA^{-1}B\right|,\end{equation}
where $|\cdot|$ denotes the determinant of a matrix.
Also, recall that if $M$ is a $p\times q$ matrix, then
\begin{equation}\label{eq:MMT}|xI-MM^\top|=x^{p-q}|xI-M^\top M|.\end{equation}
(Note that \eqref{eq:MMT} might not be valid if $p\le q$ and $x = 0$, but this has no effect in our argument since two polynomials that agree in all but finitely many points, agree everywhere.)

Let
\begin{equation}\label{eq:f}f(x)=x^2+(2-k)x-k,\end{equation}
 and let $f^j(x)$ denote the polynomial of degree $2^j$ obtained by $j$ times composition of $f$ with itself.
 As a convention, we let $f^0(x)=x$.

We now give the main result of this section.
\begin{theorem}
Let $k$  be an integer and  $P_n(x)$ denote the characteristic polynomial of the adjacency matrix of $S^{++}(n,k)$. Then, $P_n$ satisfies the recursion relation
\begin{equation}\label{recursion}
P_n(x)=\left(x(x+2)\right)^{k^{n-2}\left({k\choose 2}-1\right)}P_{n-1}(f(x)),~~n\ge2,
\end{equation}
with $P_1(x)=(x-k)(x+1)^k$.
Moreover, for $n\geq 2$, the spectrum of $S^{++}(n,k)$ consists of the following eigenvalues:
\begin{itemize}
  \item[\rm(i)] $k$ with multiplicity $1$,
   \item[\rm(ii)]  the zeros of $f^{n-1}(x)+1$ each with  multiplicity $k$,
  \item[\rm(iii)]  the zeros of $f^j(x)$ each with multiplicity $k^{n-2-j}\left({k\choose 2}-1\right)$ for $j=0,1,\ldots,n-2$,
  \item[\rm(iv)] the zeros of $f^j(x)+2$ each with multiplicity $k^{n-2-j}\left({k\choose 2}-1\right)+1$ for $j=0,1,\ldots,n-2$.
\end{itemize}
\end{theorem}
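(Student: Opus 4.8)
The plan is to turn the isomorphism $\Gamma_{n+1}=\li(\s(\Gamma_n))$ of Lemma~\ref{lineiso} (write $\Gamma_n:=S^{++}(n,k)$) into a recursion for the characteristic polynomials $P_n$, iterate it down to $P_1$, and finally read off the four eigenvalue families by a short factorisation identity for $f$. First I would record the two structural facts I need. From the description used in the proof of Lemma~\ref{lineiso}, $\Gamma_n$ is the union of $S(n,k)$ and $S(n-1,k)$ together with a perfect matching of their extreme vertices; every extreme vertex thereby gains its missing edge, so $\Gamma_n$ is $k$-regular. Counting vertices and edges gives $p:=|V(\Gamma_n)|=k^{n-1}(k+1)$ and $q:=|E(\Gamma_n)|=k^n(k+1)/2$, hence $q-p=k^{n-1}({k\choose2}-1)$, which is precisely the exponent appearing in \eqref{recursion}.

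For the recursion itself, set $H:=\s(\Gamma_n)$ and apply \eqref{eq:xtx} to $H$: $A(\li(H))=X(H)^\top X(H)-2I$, so $P_{n+1}(x)=\det\big((x+2)I-X(H)^\top X(H)\big)$. Since $X(H)$ has $p+q$ rows and $2q$ columns, \eqref{eq:MMT} replaces $X(H)^\top X(H)$ by the smaller matrix $X(H)X(H)^\top$ at the cost of a factor $(x+2)^{q-p}$. Now $X(H)X(H)^\top=D(H)+A(H)$ (the general incidence identity, of which \eqref{eq:xxt} is the regular case), and because $H$ is bipartite with old vertices of degree $k$ and subdivision vertices of degree $2$ it has the block form
\[ X(H)X(H)^\top=\begin{pmatrix} kI_p & X \\ X^\top & 2I_q\end{pmatrix},\qquad X:=X(\Gamma_n). \]
Evaluating $\det\big((x+2)I-X(H)X(H)^\top\big)$ by the Schur complement \eqref{eq:det} with respect to the bottom-right block (which becomes $xI_q$) leaves the complement $(x+2-k)I_p-\tfrac1x XX^\top$, into which \eqref{eq:xxt} substitutes $XX^\top=kI+A(\Gamma_n)$; clearing the power of $x$ turns the spectral parameter into $x^2+(2-k)x-k=f(x)$ and yields $P_{n+1}(x)=(x(x+2))^{q-p}P_n(f(x))$. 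This is \eqref{recursion}, and the base case $P_1(x)=(x-k)(x+1)^k$ is just the adjacency spectrum of $K_{k+1}=S^{++}(1,k)$.

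Unfolding the recursion down to $P_1$ gives the closed form
\[ P_n(x)=\Big(\prod_{j=0}^{n-2}\big(f^j(x)(f^j(x)+2)\big)^{k^{n-2-j}({k\choose2}-1)}\Big)\big(f^{n-1}(x)-k\big)\big(f^{n-1}(x)+1\big)^{k}. \]
To extract items (i)--(iv) I would use the identity $f(t)-k=(t-k)(t+2)$, which telescopes to $f^{n-1}(x)-k=(x-k)\prod_{j=0}^{n-2}(f^j(x)+2)$. Substituting this isolates a single linear factor $x-k$ and distributes one extra copy of each $f^j(x)+2$ into the product. Reading off the exponents then gives exactly: $k$ with multiplicity $1$ (i); the zeros of $f^{n-1}(x)+1$ with multiplicity $k$ (ii); the zeros of each $f^j(x)$ with multiplicity $k^{n-2-j}({k\choose2}-1)$ (iii); and the zeros of each $f^j(x)+2$ with multiplicity $k^{n-2-j}({k\choose2}-1)+1$ (iv). A degree check, in which the four families sum to $(k+1)k^{n-1}=p$, confirms that the factorisation is complete.

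I expect the main obstacle to be the determinant computation of the second paragraph, precisely because $\s(\Gamma_n)$ is \emph{not} regular, so \eqref{eq:xxt} cannot be applied to it directly; the point is to recognise $X(H)X(H)^\top$ as the signless Laplacian with its explicit two-block form and then push the regularity of $\Gamma_n$ through the Schur complement so that $f$ emerges. A minor technical caveat is that \eqref{eq:det} and \eqref{eq:MMT} are invoked at values of $x$ where the pivoted blocks vanish, but, as the excerpt already notes, this is immaterial since the identities equate polynomials that agree off a finite set. The closing telescoping identity $f^{n-1}(x)-k=(x-k)\prod_j(f^j(x)+2)$ is short, yet it is the decisive step that both singles out the simple eigenvalue $k$ and explains why the multiplicities in (iv) exceed those in (iii) by exactly $1$.
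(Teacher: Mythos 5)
Your proposal is correct and follows essentially the same route as the paper: the recursion is obtained from Lemma~\ref{lineiso} together with the incidence identities \eqref{eq:xtx}, \eqref{eq:xxt}, \eqref{eq:MMT} and the Schur-complement formula \eqref{eq:det}, and the spectrum is then read off via the factorization $f(x)-k=(x-k)(x+2)$. The only differences are cosmetic: you pivot the Schur complement on the $2I_q$ block so that $XX^\top$ appears directly (using \eqref{eq:MMT} once rather than twice), and you apply \eqref{f-k} in the telescoped form $f^{n-1}(x)-k=(x-k)\prod_{j=0}^{n-2}\left(f^j(x)+2\right)$ on a closed-form product instead of invoking it step-by-step inside an induction.
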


\begin{proof} Let $\Gamma_n:=S^{++}(n,k)$.
Suppose that $X$ and $Y$ are the incidence matrices of $\Gamma_{n-1}$ and $\s(\Gamma_{n-1})$, respectively.
By Lemma~\ref{lineiso}, $\Gamma_n$ is isomorphic to $\li(\s(\Gamma_{n-1}))$. It follows that
$$YY^\top=\left[\begin{array}{cc}kI_p & X \\ X^\top &   2I_q\end{array}\right],$$
where the matrix is divided according to the partition of the vertices into $p=k^{n-1}+k^{n-2}$ `old' vertices of $\Gamma_{n-1}$
and $q=\frac{1}{2} (k^n+k^{n-1})$ `new' vertices (which have degree $2$) added to $\Gamma_{n-1}$ to obtain $\s(\Gamma_{n-1})$.
Therefore, from \eqref{eq:det},
\begin{align}
    \ld xI-YY^\top\rd&=\ld(x-k)I_p\rd\cdot\ld(x-2)I_q-X^\top\left((x-k)I_p\right)^{-1}X\rd\nonumber\\
    &=(x-k)^p\ld(x-2)I_q-\frac{1}{x-k}X^\top X\rd\nonumber\\
    &=(x-k)^{p-q}\ld(x-2)(x-k)I_q-X^\top X\rd\nonumber\\
    &=(x-k)^{p-q}((x-2)(x-k))^{q-p}\ld(x-2)(x-k)I_p-XX^\top\rd~~~\hbox{(by \eqref{eq:MMT})}\nonumber\\
     &=(x-2)^{q-p}\ld((x-2)(x-k)-k)I_p-A(\Gamma_{n-1})\rd~~~\hbox{(by \eqref{eq:xxt})}\nonumber\\
     &=(x-2)^{q-p}P_{n-1}((x-2)(x-k)-k).\label{eq:Pn-1}
\end{align}
On the other hand, by \eqref{eq:xtx} and \eqref{eq:xxt}, we have
$$P_n(x)=\ld (x+2)I_{2q}-Y^\top Y\rd=(x+2)^{q-p}\ld (x+2)I_{p+q}-YY^\top\rd.$$
Now, from \eqref{eq:Pn-1} it follows that
$$P_n(x)=(x(x+2))^{q-p}P_{n-1}(x(x+2-k)-k),$$
implying \eqref{recursion}.

To prove the second part of the theorem, note that as $\Gamma_1=K_{k+1}$, we have $P_1(x)=(x-k)(x+1)^k$. From \eqref{recursion}, we conclude that
$$P_2(x)=(x(x+2))^{{k\choose2}-1}(f(x)-k)(f(x)+1)^k,$$
and since
\begin{equation}\label{f-k}
f(x)-k=(x+2)(x-k),
\end{equation}
 the assertion follows for $n=2$.
Now assume that $n\ge3$ and the assertion holds for $n-1$. So, we have
$$P_{n-1}(x)=(x-k)\left(f^{n-2}(x)+1\right)^k\prod_{j=0}^{n-3}\left(f^j(x)\right)^{m_{n-3-j}}\left(f^j(x)+2\right)^{1+m_{n-3-j}},$$
in which $m_i=k^i\left({k\choose 2}-1\right)$.
It follows that
$$P_{n-1}(f(x))=(f(x)-k)\left(f^{n-1}(x)+1\right)^k\prod_{j=1}^{n-2}\left(f^j(x)\right)^{m_{n-2-j}}\left(f^j(x)+2\right)^{1+m_{n-2-j}}.$$
This, together with \eqref{recursion} and \eqref{f-k},  implies the result.
\end{proof}

\begin{remark}\rm It is straightforward to see that the zeros of $f^j(x)$ and  $f^j(x)+2$ for
	$j=1$ are $\frac12(k-2\pm\sqrt{k^{2}+4})$ and  $\frac12(k-2\pm\sqrt{k^{2}-4}),$
			 respectively, and for $j\ge2$ are of the form
$$\frac{1}{2}(k-2)\pm\frac{1}{2}\sqrt{k(k+2) \pm 2\sqrt{k(k+2) \pm2\sqrt{ \cdots \pm 2\sqrt{k^2+4}}}}$$
and
$$\frac{1}{2}(k-2)\pm\frac{1}{2}\sqrt{k(k+2) \pm 2\sqrt{k(k+2) \pm2\sqrt{ \cdots \pm 2\sqrt{k^2-4}}}},$$
respectively, each of them consisting of $j$ nested radicals in iterative forms. Moreover, the zeros of $f^{n-1}(x)+1$  are
\begin{align*}
&-1,\,k-1,\, \frac{1}{2}(k-2)\pm\frac{1}{2}\sqrt{k^2+4k},\, \frac{1}{2}(k-2)\pm\frac{1}{2}\sqrt{k(k+2) \pm 2\sqrt{k^2+4k}},\ldots, \\
&~~~\frac{1}{2}(k-2)\pm\frac{1}{2}\sqrt{k(k+2) \pm 2\sqrt{k(k+2) \pm2\sqrt{ \cdots \pm 2\sqrt{k^2+4k}}}},
\end{align*}
where the last one consists of $n-2$ nested radicals.
\end{remark}

\section{Laplacian spectrum of $S(n,k)$}\label{sec:S}

 For a graph $\Gamma$, the matrix $L(\Gamma)=D(\Gamma)-A(\Gamma)$ is the {\em Laplacian matrix} of $\Gamma$, where
$D(\Gamma)$ is the diagonal matrix of vertex degrees.
The multi-set of eigenvalues of $L(\Gamma)$ is called the {\em Laplacian spectrum} of $\Gamma$.
In this section, we deal with the Laplacian spectrum of $S(n,k)$.
This is trivial for $n=1$ or $k=1$.
For $k=2,3$, the Laplacian spectrum of $S(n,k)$ is already known (see Remark~\ref{rem:conj} below). We establish the case $n=2$,
and put forward a conjecture  explicitly describing the Laplacian spectrum of $S(n,k)$ in general.

Let $E_{ij}$ be a $k\times k$ matrix in which all entries are $0$, except the $(i,j)$ entry that is $1$.
Consider the $k^2\times k^2$ matrix
$$C:=\sum_{i=1}^k\sum_{j=1}^k(E_{ij}\otimes E_{ji}),$$
where `$\otimes$' denotes the Kronecker product. The matrix $C$ is called  the {\em commutation matrix}. The main property of the commutation matrix (see Magnus and Neudecker \cite{mn}) is that it commutes the Kronecker product:
for any $k\times k$ matrices $M,N$,
$$C(M\otimes N)C=N\otimes M.$$
Note that each row and each column of $C$ corresponds with a pair $(i,j)$ for $1\le i,j\le k$.
Moreover, $C$ is indeed a permutation matrix in which the only $1$ entry in the row $(i,j)$ is located at the column $(j,i)$ for every $1\le i,j\le k$.

For $n=1$, the Laplacian spectrum of $S(1,k)=K_k$ is $\left\{0^{[1]},\,k^{[k-1]}\right\}$, where the superscripts indicate multiplicities.
In the following theorem, we determine the Laplacian spectrum of $S(2,k)$.

\begin{theorem}
The Laplacian spectrum of $S(2,k)$ is the following:
$$\left\{0^{[1]},~ k^{\left[k\choose 2\right]},~(k+2)^{\left[k-1\choose 2\right]},~\left(\frac{1}{2}(k+2)\pm \frac{1}{2}\sqrt{k^2+4}\right)^{[k-1]}\right\}.$$
\end{theorem}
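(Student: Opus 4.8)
The plan is to realize the Laplacian as an explicit operator on $k\times k$ matrices and then solve the eigenvalue equation by hand. First I would identify the vertex $(i,j)$ with the basis vector $e_i\otimes e_j$, so that a function on the vertices becomes a matrix $X=(X_{ij})$. The graph $S(2,k)$ consists of $k$ disjoint copies of $K_k$ (the cliques on a fixed first coordinate), whose adjacency matrix is $I_k\otimes(J_k-I_k)$, together with the ``bridge'' edges $(i,j)\sim(j,i)$ for $i\ne j$, whose adjacency matrix is $C-P$, where $J_k$ is the all-ones matrix, $C$ is the commutation matrix, and $P=\sum_i E_{ii}\otimes E_{ii}$ is the diagonal indicator of the extreme vertices $(i,i)$. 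Since the extreme vertices have degree $k-1$ and all others degree $k$, the degree matrix is $D=kI_{k^2}-P$, so the correction $P$ cancels and one gets the clean formula
$$L=D-A=(k+1)I_{k^2}-I_k\otimes J_k-C.$$

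Next I would transport the eigenequation $Lx=\la x$ to the matrix picture. Under the identification above one has $(I_k\otimes J_k)x\leftrightarrow XJ_k$ and $Cx\leftrightarrow X^\top$, so writing $\mu:=k+1-\la$ the equation becomes $XJ_k+X^\top=\mu X$, that is,
$$r_p+X_{qp}=\mu X_{pq}\qquad(1\le p,q\le k),$$
where $r_p:=\sum_q X_{pq}$ is the $p$-th row sum. Pairing this relation with the one obtained by swapping $p$ and $q$, then adding resp.\ subtracting, separates the symmetric and antisymmetric parts of $X$. For $\mu\ne\pm1$ the pair solves to $X_{pq}=(\mu r_p+r_q)/(\mu^2-1)$, and substituting back into $r_p=\sum_q X_{pq}$ yields the single consistency condition $(\mu^2-k\mu-1)\,r_p=R$ for all $p$, where $R:=\sum_p r_p$. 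Summing over $p$ forces $(\mu-1-k)R=0$.

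From here the eigenvalues fall out by cases. If $R\ne0$ then $\mu=k+1$, giving $\la=0$ with multiplicity $1$ (the graph is connected). If $R=0$ then either $\mu^2-k\mu-1=0$, so $\mu=\frac12(k\pm\sqrt{k^2+4})$ with the row-sum vector $(r_1,\dots,r_k)$ free on the hyperplane $\sum_p r_p=0$, yielding $\la=\frac12(k+2)\mp\frac12\sqrt{k^2+4}$ each with multiplicity $k-1$; or all $r_p=0$, which forces $X=0$. The remaining eigenvalues come from the excluded values $\mu=\pm1$. For $\mu=1$ the equation reads $X_{pq}-X_{qp}=r_p$; setting $p=q$ gives $r_p=0$, so the solution space is the symmetric zero-row-sum matrices, of dimension $\binom k2$, giving $\la=k$. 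For $\mu=-1$ one gets $X_{pq}+X_{qp}=-r_p$, whence all $r_p$ equal a common value $c$, and the solution space is the antisymmetric matrices whose row sums all equal $(k+2)c$.

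The main obstacle is this $\mu=-1$ case: unlike the others it does not follow from the generic formula, and one must argue that $c$ vanishes and then count the relevant dimension. Summing the row-sum constraints and using that an antisymmetric matrix has total sum $0$ forces $c=0$, leaving the antisymmetric zero-row-sum matrices; computing their dimension $\binom{k-1}2$ requires noting that $X\mapsto X\1$ maps the antisymmetric matrices onto the hyperplane $\1^\perp$, so the kernel has dimension $\binom k2-(k-1)=\binom{k-1}2$, giving $\la=k+2$. As a final check the multiplicities sum to $1+\binom k2+\binom{k-1}2+2(k-1)=k^2$, confirming that the list is the complete Laplacian spectrum.
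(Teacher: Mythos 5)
Your proof is correct, and it takes a genuinely different route from the paper's. Both arguments rest on the same structural observation---under the identification of the vertex $(i,j)$ with $e_i\otimes e_j$, the matching edges act as the commutation matrix, i.e.\ as transposition $X\mapsto X^\top$---but they exploit it quite differently. The paper writes $L=Q-B$ with $Q=L(kK_k)+I$ and $B$ the commutation matrix, obtains the eigenvalues $k$ and $k+2$ (with multiplicities at least $\binom{k}{2}$ and $\binom{k}{2}-k$) from dimension bounds on intersections of eigenspaces of $Q$ and $B$, gets the irrational pair from the operator identity $(L^2-(k+2)L+kI)(QB-BQ)=O$ together with the rank computation $\mathrm{rank}(QB-BQ)=2k-2$ and the irreducibility of $x^2-(k+2)x+k$, and then pins down the single remaining eigenvalue by a trace count. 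You instead solve the eigenequation outright in the matrix picture, $XJ_k+X^\top=\mu X$ with $\mu=k+1-\lambda$, and your case analysis on $\mu$ yields explicit eigenspaces: constants ($\lambda=0$), symmetric zero-row-sum matrices ($\lambda=k$, dimension $\binom{k}{2}$), antisymmetric zero-row-sum matrices ($\lambda=k+2$, dimension $\binom{k-1}{2}$), and the row-sum-parametrized solutions for $\lambda=\frac{1}{2}(k+2)\mp\frac{1}{2}\sqrt{k^2+4}$ (dimension $k-1$ each). Your method is more elementary and more informative: it exhibits eigenvectors and exact multiplicities for every eigenvalue, so the final sum $1+\binom{k}{2}+\binom{k-1}{2}+2(k-1)=k^2$ is only a confirmation, whereas the paper's argument produces lower bounds that become exact only after the closing trace argument. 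One small slip, which is harmless: in your $\mu=-1$ case, writing $X=Y-\frac{c}{2}J_k$ with $Y$ antisymmetric, the row sums of $Y$ equal $\frac{(k+2)c}{2}$, not $(k+2)c$; since summing these and using that an antisymmetric matrix has total sum $0$ forces $c=0$ either way, nothing downstream is affected.
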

\begin{proof}
First, note that the graph $S(2,k)$ consists of $k$ copies of $K_k$ together with a matching $M$ of size $k\choose2$; exactly one edge for each pair of copies of $K_k$. Let $L$ denote the Laplacian matrix of $S(2,k)$, and $L'$ be the Laplacian matrix of the induced subgraph by the edges of $M$.
It is seen that $L=Q-B$, where $Q=L(kK_k)+I$ and $B=I-L'$. Note that $B$ is a permutation matrix with ${k\choose 2}+k$ eigenvalues $1$ and ${k\choose 2}$ eigenvalues $-1$. Observe that $Q$ has $k$ eigenvalues $1$ and $k^2-k$ eigenvalues $k+1$.
 We have the following bounds on the dimensions of the  intersections of the eigenspaces of $B$ and $Q$:
\begin{align*}
\dim(\E_1(B)\cap\E_{k+1}(Q))&\geq k^2-k+{k\choose 2} +k-k^2={k\choose 2},\\
\dim(\E_{-1}(B)\cap\E_{k+1}(Q))&\geq k^2-k+{k\choose 2}-k^2={k\choose 2}-k,
\end{align*}
in which $\E_\la$ denotes the eigenspace corresponding to the eigenvalue $\la$.
For $\x\in\E_1(B)\cap\E_{k+1}(Q)$, we have $L\x=k\x$ and for $\x\in\E_{-1}(B)\cap\E_{k+1}(Q)$, $L\x=(k+2)\x$.
This means that $L$ has eigenvalues $k$ and $k+2$ with multiplicities at least $k\choose2$ and ${k\choose2}-k$, respectively.

We also have
\begin{equation}\label{eq:Q}Q=I_k\otimes ((k+1)I_k-J_k),\end{equation}
and from the eigenvalues of $Q$,
\begin{equation}\label{eq:Q^2}Q^2-(k+2)Q+(k+1)I=O.\end{equation}
Coming back to $B$, for each of the extreme vertices $(1,1),\ldots,(k,k)$ of $S(2,k)$, there is a $1$ on all the entries of the diagonal of $B$.
The off-diagonal $1$'s correspond with the edges of $M$. By the definition of $S(2,k)$, the edges of $M$ connect the vertices $(i,j)$ and $(j,i)$ for $i\ne j$.
It turns out that $B$ is the commutation matrix, and thus
\begin{equation}\label{eq:BQB}BQB=((k+1)I_k-J_k)\otimes I_k.\end{equation}
The right sides of \eqref{eq:Q} and \eqref{eq:BQB} commute, and so
$$BQBQ=QBQB.$$
Next, we see that
\begin{align*}
 (L^2-(k&+2)L+kI)(QB-BQ)\\&=((Q-B)^2-(k+2)(Q-B)+kI)(QB-BQ)\\
 &=\left(Q^2-(k+2)Q+kI+B^2+(k+2)B-QB-BQ\right)(QB-BQ)\\
  &=\left((k+2)B-QB-BQ\right)(QB-BQ)~~~~\hbox{(by \eqref{eq:Q^2} and since $B^2=I$)}\\
 &=Q^2-(k+2)Q-B\left(Q^2-(k+2)Q\right)B-(QB)^2+(BQ)^2\\
 &=(BQ)^2-(QB)^2\\
 &=O.
\end{align*}

The above equality shows that every vector in the column space of $QB-BQ$ is an eigenvector for $L$ with eigenvalues $\la$, where $\la^2-(k+2)\la+k=0$. To obtain the multiplicity of such $\la$, we compute the rank of $QB-BQ$:
\begin{align}
  \rk(QB-BQ)&=\rk(Q-BQB)\nonumber\\
  &=\rk(I_k\otimes ((k+1)I_k-J_k)-((k+1)I_k-J_k)\otimes I_k)\nonumber\\
  &=\rk(J_k\otimes I_k-I_k\otimes J_k)\nonumber\\
  &=2k-2.\label{eq:2k-2}
\end{align}
To show \eqref{eq:2k-2}, suppose  $P$ is a $k\times k$ matrix whose first column is $\frac1{\sqrt k}(1,\ldots,1)^{\top}$ and that $PP^{\top}=I_k$. Then, $J_k = P(kE_{11})P^{\top}$, and so
$$J_k\otimes I_k-I_k\otimes J_k=(P\otimes P)\big((kE_{11}\otimes I_k)-(I_k\otimes kE_{11})\big)(P^{\top}\otimes P^{\top}).$$
Since $(kE_{11}\otimes I_k)-(I_k \otimes kE_{11})$
 is a diagonal matrix having precisely $2k-2$ non-zero entries in
the columns $2,3,\ldots,k,k+1,2k+1,3k+1,\ldots,(k-1)k+1$,  \eqref{eq:2k-2} follows.

As  $x^2-(k+2)x+k$ is an irreducible polynomial, each of its roots is an  eigenvalue of $L$ with multiplicity at least $k-1$.
The matrix $L$ has a $0$ eigenvalue. Thus, we have obtained so far ${k\choose2}+{k\choose2}-k+2(k-1)+1=k^2-1$ eigenvalues of $L$.
As the sum of the eigenvalues of $L$ is twice the number of edges of $S(2,k)$, it follows that the remaining eigenvalue is $k+2$. So the proof is complete.
\end{proof}

Based on empirical evidence, we put forward the following conjecture.
\begin{conj}\label{conj}\rm
For $n,k\geq 2$, the Laplacian spectrum of $S(n,k)$ consists of the following eigenvalues:
\begin{itemize}
  \item[\rm(i)] $0$ with multiplicity $1$.
    \item[\rm(ii)]  The zeros of $f^j(k-x)$, each with multiplicity $\frac12(k^{n-j}-2k^{n-j-1}+k)$ for $j=0,1,\ldots,n-1$, where $f$ is given in \eqref{eq:f}.
  \item[\rm(iii)] The zeros of $f^j(k-x)+2$, each with multiplicity $\frac12(k^{n-j-1}-1)(k-2)$ for $j=0,1,\ldots,n-2$.
\end{itemize} 
\end{conj}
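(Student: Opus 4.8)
The plan is to prove the conjecture by induction on $n$, the engine being an exact recursion for the Laplacian characteristic polynomial $\phi_n(x):=\det\bigl(xI-L(S(n,k))\bigr)$, with the case $n=2$ just established as the base. Writing $f(x)=x^2+(2-k)x-k$ as in \eqref{eq:f}, one first checks the identity $f(k-x)=x^2-(k+2)x+k$, so that the quadratic
$$\sigma(x):=k-f(k-x)=x(k+2-x)$$
carries a root $\mu$ of $f^{\,j}(k-x)$ to a root of $f^{\,j-1}(k-x)$; iterating $\sigma$ is therefore exactly what manufactures the nested radicals and the families $f^{\,j}(k-x)$ and $f^{\,j}(k-x)+2$ in the statement. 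Concretely, the recursion I would aim to establish is
$$\phi_n(x)=(x-k)^{\frac12(k^{n}-2k^{n-1}+k)}\,(x-k-2)^{\frac12(k^{n-1}-1)(k-2)-1}\,\phi_{n-1}\bigl(x(k+2-x)\bigr),$$
valid for $k\ge3$ (the case $k=2$ is the path $P_{2^n}$, whose Laplacian spectrum is classical). Starting from $\phi_1(x)=x(x-k)^{k-1}$ and applying this once must reproduce the $n=2$ theorem; that is the consistency check I would run first, and it already fixes the (integer, nonnegative) prefactor exponents.

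To set up the recursion I would decompose $S(n,k)$ into its $k$ copies of $S(n-1,k)$ (indexed by the first coordinate), joined by the $\binom{k}{2}$-edge matching connecting $(i,j,\dots,j)$ to $(j,i,\dots,i)$. In the tensor splitting $\mathbb{R}^{V(S(n,k))}=\mathbb{R}^{k}\otimes\mathbb{R}^{V(S(n-1,k))}$ this gives
$$L(S(n,k))=I_k\otimes L(S(n-1,k))+D-B,$$
where $B$ is supported on the extreme-vertex subspace and, after identifying extreme vertices with $[k]$, is precisely a commutation matrix of the kind $C$ used in the $n=2$ proof, while $D$ is the diagonal degree correction ($+1$ at each matched extreme vertex). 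I would then use the $S_k$-action permuting the alphabet to block-diagonalize $L(S(n,k))$ along isotypic components and perform a Schur-complement computation (in the spirit of \eqref{eq:det} and \eqref{eq:MMT}) on the $k^2$-dimensional extreme-vertex block. The aim of this step is to show that, restricted to each relevant invariant subspace, the reduced matrix relating a pair of lifted eigenvectors has trace $k+2$ and determinant equal to the underlying eigenvalue of $L(S(n-1,k))$, i.e.\ that it realizes exactly the substitution $\sigma$.

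Granting the recursion, the multiplicities follow mechanically: $\sigma$ sends each eigenvalue $\mu'$ of $S(n-1,k)$ to two eigenvalues of $S(n,k)$ carrying the \emph{same} multiplicity, which matches the statement because $\frac12(k^{(n-1)-(j-1)}-2k^{(n-1)-(j-1)-1}+k)=\frac12(k^{n-j}-2k^{n-j-1}+k)$ and likewise for the $f^{\,j}(k-x)+2$ family; the eigenvalue $0$ of $S(n-1,k)$ lifts to $0$ and $k+2$; and the explicit prefactors contribute the two genuinely new families, the eigenvalue $x=k$ (case $j=0$ of (ii)) and the extra copies of $x=k+2$ (case $j=0$ of (iii)). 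A degree count confirms the prefactor exponents sum to $k^{n}-2k^{n-1}$, and a trace check against $2|E(S(n,k))|$ would pin down the final eigenvalue exactly as in the $n=2$ argument.

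The real difficulty is that $S(n,k)$ is \emph{not} regular: its $k$ extreme vertices have degree $k-1$, so the incidence-matrix identities \eqref{eq:xtx} and \eqref{eq:xxt} that drove the $S^{++}(n,k)$ computation are unavailable, and $I_k\otimes L(S(n-1,k))$ does not commute with $B$. Consequently the Schur-complement reduction cannot be carried out knowing only the \emph{spectrum} of $S(n-1,k)$: it needs the \emph{boundary data} of the eigenvectors, namely how each eigenspace of $L(S(n-1,k))$ restricts to the $k$ extreme vertices, since that subspace is exactly where $B$ and $D$ act. Making the induction go through therefore requires strengthening the hypothesis to an eigenvector-level statement — a prescribed splitting of each eigenspace into parts supported on, respectively vanishing on, the extreme vertices, together with their $S_k$-symmetry types — and proving this refined structure is reproduced at the next level. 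Verifying that this boundary-data hypothesis is preserved (and that no accidental coincidence between the $f^{\,j}(k-x)$ and $f^{\,j}(k-x)+2$ families inflates a multiplicity) is where the genuine work lies, and is presumably why the statement is recorded as a conjecture; one shortcut worth exploring is that the \emph{same} polynomial $f$ governs the adjacency spectrum of the regular graphs $S^{++}(n,k)$, which may furnish a direct spectral bridge from the regular model to the Laplacian of $S(n,k)$.
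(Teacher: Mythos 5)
This statement is Conjecture~\ref{conj} of the paper: the authors give \emph{no proof} of it, and explicitly record it as open, verifying only the case $n=2$ (the theorem on $S(2,k)$), the case $k=2$ (Proposition~\ref{prop:path}, via Chebyshev polynomials), and the case $k=3$ (cited from Grigorchuk--\v{S}uni\'c). Your proposal likewise does not prove it, and you should be clear-eyed that what you have written is a research program, not a proof. The entire argument funnels through the displayed recursion
$$\phi_n(x)=(x-k)^{\frac12(k^{n}-2k^{n-1}+k)}\,(x-k-2)^{\frac12(k^{n-1}-1)(k-2)-1}\,\phi_{n-1}\bigl(x(k+2-x)\bigr),$$
and that recursion is never derived; the block-decomposition/Schur-complement step that is supposed to produce it is only described in aspiration (``the aim of this step is to show\ldots''), and you yourself then explain why it cannot be carried out from the spectral data of $S(n-1,k)$ alone. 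That diagnosis is correct and is precisely the obstruction: since $S(n,k)$ is not regular, the matrices $I_k\otimes L(S(n-1,k))$, $D$, and $B$ do not interact in any way controlled by eigenvalues only, so the induction would need, as part of its hypothesis, how each eigenspace of $L(S(n-1,k))$ restricts to the $k$ extreme vertices together with its $S_k$-symmetry type --- a strengthened, eigenvector-level statement that you neither formulate precisely nor prove is propagated. Proving your recursion is essentially equivalent to proving the conjecture, so the proposal has a genuine gap at its core rather than at its periphery.

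On the positive side, your consistency checks are sound and worth recording: the identity $f(k-x)=x^2-(k+2)x+k$, hence $\sigma(x):=k-f(k-x)=x(k+2-x)$, does pull zeros of $f^{\,j}(k-x)$ back from zeros of $f^{\,j-1}(k-y)$; the degree count of the prefactors ($k^n-2k^{n-1}$) is right; and the multiplicity bookkeeping under $\sigma$ reproduces items (ii) and (iii) exactly, including the $-1$ in the exponent of $(x-k-2)$ being refunded by the pullback of the eigenvalue $0$ to $\{0,k+2\}$. Two smaller points to fix if you pursue this: first, as written the recursion is false for odd $k$ for a trivial reason --- $\phi_n$ is monic while $\phi_{n-1}(\sigma(x))$ has leading coefficient $(-1)^{k^{n-1}}$, so a sign factor $(-1)^{k^{n-1}}$ must be inserted; second, the claim that each eigenvalue $\mu$ of $S(n-1,k)$ lifts to two eigenvalues ``carrying the same multiplicity'' needs the discriminant condition $(k+2)^2\neq 4\mu$ and an argument that the families $f^{\,j}(k-x)$ and $f^{\,j}(k-x)+2$ share no roots (you flag the latter but do not prove it). Your closing suggestion of a ``spectral bridge'' from $S^{++}(n,k)$ is attractive but faces the same wall: the paper's $S^{++}$ computation runs entirely on the regularity identities \eqref{eq:xtx}--\eqref{eq:xxt}, which is exactly what $S(n,k)$ lacks.
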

\begin{remark}\rm The zeros of $f^j(k-x)$ and  $f^j(k-x)+2$
	 for	$j=1$ are $\frac12(k+2\pm\sqrt{k^{2}+4})$ and  $\frac12(k+2\pm\sqrt{k^{2}-4}),$
	respectively, and for $j\ge2$ are of the form
$$\frac{1}{2}(k+2)\pm\frac{1}{2}\sqrt{k(k+2) \pm 2\sqrt{k(k+2) \pm2\sqrt{ \cdots \pm 2\sqrt{k^2+4}}}}$$
and
$$\frac{1}{2}(k+2)\pm\frac{1}{2}\sqrt{k(k+2) \pm 2\sqrt{k(k+2) \pm2\sqrt{ \cdots \pm 2\sqrt{k^2-4}}}},$$
respectively, each of them consisting of $j$ nested radicals.
\end{remark}
\begin{remark}\label{rem:conj}\rm
The graph $S(n,2)$ is the path graph on $2^n$ vertices. Proposition~\ref{prop:path} below shows that Conjecture~\ref{conj} holds for $S(n,2)$.
In Grigorchuk and \v{S}uni\'c \cite{gs}, the spectrum of the Schreier graph $\Gamma_n$ was determined.
The graph $\Gamma_n$ is, in fact, the graph obtained from $S(n,3)$ by adding a loop on each extreme vertex.
By the way the adjacency matrix of $A(\Gamma_n)$ is defined in \cite{gs}, for each loop a $1$ entry on the diagonal is considered, so that each row and column of $A(\Gamma_n)$ has constant sum $3$. It is then observed that the Laplacian spectrum of $S(n,3)$ can be deduced from the spectrum of $\Gamma_n$, which agrees with Conjecture~\ref{conj}.
In summary, Conjecture~\ref{conj} holds for $n=2$ and for $k=2,3$.
\end{remark}

It is known in the literature that the characteristic polynomial of the Laplacian matrix of the paths can be expressed in terms of the 
Chebyshev polynomials. From this fact, for paths with $2^n$ vertices, we obtain the iterated form according to Conjecture~\ref{conj}. For the sake of completeness, we give its complete argument here.
\begin{proposition} \label{prop:path}
The characteristic polynomial of the Laplacian matrix of the path graph on $2^n$ vertices is equal to
$x\prod_{j=0}^{n-1}g^j(2-x)$, where $g(x)=x^2-2$.
\end{proposition}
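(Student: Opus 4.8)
The plan is to work from the explicit Laplacian eigenvalues of a path rather than to unwind determinant recursions, which is the Chebyshev connection alluded to above. It is classical that the Laplacian eigenvalues of the path $P_N$ on $N$ vertices are $\la_m=2-2\cos(m\pi/N)$ for $m=0,1,\ldots,N-1$, so its characteristic polynomial is $\prod_{m=0}^{N-1}\bigl(x-2+2\cos(m\pi/N)\bigr)$, with the $m=0$ term supplying the factor $x$. Everything then reduces to showing that, for $N=2^n$, the nonzero eigenvalues are distributed among the factors $g^j(2-x)$ exactly as the statement predicts.

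The bridge to $g$ is the Chebyshev-type identity $g^j(2\cos\theta)=2\cos(2^j\theta)$, which I would establish by a one-line induction on $j$ from $g(2\cos\theta)=4\cos^2\theta-2=2\cos2\theta$. Setting $2-x=2\cos\theta$ gives $g^j(2-x)=2\cos(2^j\theta)$, so the zeros of the degree-$2^j$ polynomial $g^j(2-x)$ are precisely $x_{j,\ell}=2-2\cos\frac{(2\ell+1)\pi}{2^{j+1}}$ for $\ell=0,1,\ldots,2^j-1$. Since $\cos$ is injective on $(0,\pi)$, these $2^j$ zeros are real and distinct, hence are all the zeros of $g^j(2-x)$.

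The heart of the argument is a counting bijection based on $2$-adic valuations. For $m\in\{1,\ldots,2^n-1\}$ write $m=2^a b$ with $b$ odd; then $0\le a\le n-1$ and $\frac{m\pi}{2^n}=\frac{b\pi}{2^{n-a}}$. Putting $j:=n-1-a$ and $\ell:=(b-1)/2$ identifies the eigenvalue angle $\frac{m\pi}{2^n}$ with the root angle $\frac{(2\ell+1)\pi}{2^{j+1}}$, and this map is a bijection from $\{1,\ldots,2^n-1\}$ onto $\{(j,\ell):0\le j\le n-1,\ 0\le\ell\le2^j-1\}$; the counts match since $\sum_{j=0}^{n-1}2^j=2^n-1$. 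Consequently the nonzero Laplacian eigenvalues of $P_{2^n}$ are exactly the union over $j$ of the zero sets of $g^j(2-x)$, each attained once.

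Assembling these pieces, $\prod_{m=1}^{2^n-1}(x-\la_m)$ and $\prod_{j=0}^{n-1}g^j(2-x)$ have the same simple roots and the same degree $2^n-1$, so they agree up to a multiplicative constant; restoring the factor $x$ for the eigenvalue $0$ yields the claimed product. I expect the $2$-adic bijection to be the only genuinely delicate point: one must verify that distinct pairs $(j,\ell)$ produce distinct angles and that every $m$ is attained, which both follow from uniqueness of the factorization $m=2^a b$ with $b$ odd. The final normalization is routine but deserves attention, since the $j=0$ factor $2-x$ has leading coefficient $-1$ whereas each $g^j(2-x)$ with $j\ge1$ is monic, so comparing leading coefficients is what pins down the overall constant.
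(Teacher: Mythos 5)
Your argument is correct in its identification of the spectrum and takes a genuinely different route from the paper's. The paper works entirely at the level of polynomial identities: it quotes $\phi_m(x)=xU_{m-1}(x/2-1)$ from Cvetkovi\'c--Doob--Sachs, factors $U_{2^n-1}$ by iterating $U_{2k-1}=2T_kU_{k-1}$, and converts $2T_{2^j}(x/2-1)$ into $g^j(2-x)$ via $T_{2k}=T_2(T_k)$; it never needs to know the eigenvalues of the path nor that the roots are simple. You instead start from the explicit spectrum $2-2\cos(m\pi/2^n)$, prove the conjugation identity $g^j(2\cos\theta)=2\cos(2^j\theta)$ (the same Chebyshev fact in trigonometric form, since $g^j(2\cos\theta)=2T_{2^j}(\cos\theta)$), and match eigenvalues to the roots of the factors $g^j(2-x)$ through the $2$-adic valuation of $m$: writing $m=2^ab$ with $b$ odd and setting $j=n-1-a$, $\ell=(b-1)/2$ is a bijection because it is inverted by $m=2^{n-1-j}(2\ell+1)$, and all roots involved are simple because $\cos$ is injective on $[0,\pi]$. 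Your route is more self-contained (only the standard path spectrum is assumed, no factorization identities from the literature) and it explains structurally why the factors are indexed by $j=0,\dots,n-1$; the paper's route is shorter given the cited identities and needs no spectral input.

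However, your closing ``normalization'' step hides a real problem, and it is a problem with the statement itself, not with your method. Comparing leading coefficients, as you propose, does not yield the claimed product: the characteristic polynomial is monic of degree $2^n$, while $x\prod_{j=0}^{n-1}g^j(2-x)$ has leading coefficient $-1$, because the single odd-degree factor $g^0(2-x)=2-x$ contributes a sign and every $g^j(2-x)$ with $j\ge1$ is monic of even degree. So the multiplicative constant your root-matching argument leaves undetermined equals $-1$, and what you have actually proved is $\phi_{2^n}(x)=-x\prod_{j=0}^{n-1}g^j(2-x)=x(x-2)\prod_{j=1}^{n-1}g^j(2-x)$. (Check $n=1$: the proposition asserts $x(2-x)$, while $\det(xI-L(P_2))=x(x-2)$.) The paper's own proof produces exactly the same expression---its last factor is $U_1(x/2-1)=x-2$, not $2-x$---so the printed proposition is off by a factor of $-1$ under the convention $g^0(y)=y$; this does not affect the intended application, which only concerns the zeros and their multiplicities. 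But as written, your proof asserts both that the normalization ``yields the claimed product'' and that leading coefficients pin down the constant; these two claims contradict each other. State the conclusion with the sign (or absorb it by writing the $j=0$ factor as $x-2$), and your argument is complete and correct.
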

\begin{proof}  Let $\phi_m$ be the characteristic polynomial of the Laplacian matrix of the path graph on $m$ vertices.
Let $T_m$ and $U_m$ be  the Chebyshev polynomials of degree $m$ of the first and the
second kind, respectively. Then $T_m$ is the only polynomial satisfying
$T_m(\cos\theta)=\cos m\theta$ and $U_m(x) =\sin((m+1)\arccos x)/\sin(\arccos x)$ (Snyder \cite {s}).
From the identities given in Cvetkovi\'c, Doob, and Sachs \cite[p.~220]{cds}, it follows that
$\phi_m(x)=xU_{m-1}(x/2-1)$. By successive use of the identity
$U_{2k-1}(x)=2T_k(x)U_{k-1}(x)$ (see \cite[p.~98]{s}), we get
$$U_{2^n-1}(x)=2^{n-1}T_{2^{n-1}}(x)T_{2^{n-2}}(x)\cdots T_2(x)U_1(x).$$
Note that $U_1(x)=2x$ and $T_2(x)=2x^2-1$. It is seen that $2T_2(x/2-1)=x^2-4x+2=g(2-x)$.
This, together with the identity $T_{2k}(x)=T_2(T_k(x))$, implies that
$2T_{2^j}(x/2-1)=g^j(2-x)$. The proof is now complete.
\end{proof}

\section{What $S^{++}(n,k)$ are Cayley graphs?}\label{sec:Cayley}

Recall that a graph $\Gamma$ is  {\em vertex-transitive} if for any two vertices $u,v$  of $\Gamma$, there exists an automorphism $\sigma$ of $\Gamma$ such that
$\sigma(u)=v$. Let $G$ be a group and $C \subset G$ such that $1\not\in C$ and $c\in C$ implies that $c^{-1}\in C$.
The {\em Cayley graph} $\Cay(G,C)$ with the group $G$ and the `connection set' $C$ is the
graph with vertex set  $G$ in which vertex $u$ is connected to $v$ if and only
if $vu^{-1} \in C$.

It is known that any Cayley graph is vertex-transitive.
 In the other way around, at least for small orders, it seems that the great majority of vertex-transitive graphs are Cayley graphs, see McKay and Praeger \cite{bdm-cep1}. It is expected to continue to be this way for larger orders. In fact, it is conjectured in Praeger, Li, and Niemeyer \cite{cep-chl-acn} that  most vertex-transitive graphs are Cayley graphs.
   In this section, we first determine what $S^{++}(n,k)$ are vertex-transitive and, then, classify $S^{++}(n,k)$ that are Cayley graphs.

\begin{proposition}\label{prop:transitive}
The graph $S^{++}(n,k)$ is vertex-transitive if and only if either $n\le2$ or $k\le2$.
\end{proposition}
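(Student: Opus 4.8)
The plan is to exploit the recursive description in Lemma~\ref{lineiso} together with the observation that, for a $k$-regular graph $G$, the graph $\li(\s(G))$ is exactly the \emph{truncation} $\mathrm{Tr}(G)$ of $G$: its vertices are the darts (incident vertex--edge pairs) $(x,e)$ of $G$; two darts are adjacent when they share the incident vertex (the ``clique'' edges, which at each vertex $x$ span a copy of $K_k$) or when they are the two halves of one edge (the ``matching'' edges). Thus Lemma~\ref{lineiso} reads $S^{++}(n,k)=\mathrm{Tr}(S^{++}(n-1,k))$ for $n\ge2$. I would then isolate the following key fact: for a connected $k$-regular simple graph $G$ with $k\ge3$, the truncation $\mathrm{Tr}(G)$ is vertex-transitive if and only if $G$ is arc-transitive. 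The whole proposition then follows by feeding $G=S^{++}(n-1,k)$ into this equivalence.

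For the ``if'' direction, the cases $k\le2$ are handled directly: $S^{++}(n,1)$ is the single edge $K_2$, while $S^{++}(n,2)$ is connected and $2$-regular, hence a cycle; both are vertex-transitive. For $n\le2$, note $S^{++}(1,k)=K_{k+1}$ is vertex-transitive, and $S^{++}(2,k)=\mathrm{Tr}(K_{k+1})$. Since $K_{k+1}$ is arc-transitive, the easy (lifting) half of the key fact applies: every automorphism of $G$ induces one of $\mathrm{Tr}(G)$ acting on darts, so arc-transitivity of $G$ forces vertex-transitivity of $\mathrm{Tr}(G)$. This lifting direction needs no hypothesis on $k$.

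For the ``only if'' direction I argue contrapositively. Assume $n\ge3$ and $k\ge3$ and set $G:=S^{++}(n-1,k)$; since $n-1\ge2$, we have $G=\mathrm{Tr}(S^{++}(n-2,k))$, so $G$ carries clique edges (which lie in triangles because $k\ge3$) and matching edges (which lie in none, and which exist as they are the images of the edges of $S^{++}(n-2,k)$). Hence $G$ is not edge-transitive, so not arc-transitive, and the key fact gives that $\mathrm{Tr}(G)=S^{++}(n,k)$ is not vertex-transitive. The crux is therefore the nontrivial, descending half of the key fact. Here one checks first that, for $k\ge3$, the matching edges of $\mathrm{Tr}(G)$ lie in no triangle while the clique edges do: a triangle cannot use a matching edge, since at a single dart only one matching edge is present, and the presence of two clique edges at a triangle already forces all three darts to belong to one original vertex, contradicting that a matching edge joins darts of distinct vertices. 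Consequently every automorphism of $\mathrm{Tr}(G)$ preserves the partition into the $K_k$-cliques (the connected components of the triangle-edge subgraph), hence descends to a permutation of $V(G)$ that respects both adjacency and the dart structure; vertex-transitivity of $\mathrm{Tr}(G)$ then becomes transitivity on darts, i.e.\ arc-transitivity of $G$.

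The main obstacle is precisely this clique-preservation step, and it is where the hypothesis $k\ge3$ is essential: for $k=2$ the cliques degenerate to single edges, carry no triangles, and cannot be recovered from the graph structure, consistent with $S^{++}(n,2)$ being a vertex-transitive cycle. A secondary point to verify with care is that $\li(\s(G))$ genuinely coincides with $\mathrm{Tr}(G)$ and that the bottom $K_k$'s are exactly the triangle-edge components with no two sharing a dart, so that the induced action on $\mathrm{Aut}(G)$ is well defined and the passage between vertices of $\mathrm{Tr}(G)$ and darts of $G$ is a genuine bijection respected by automorphisms.
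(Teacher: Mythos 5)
Your proof is correct, and apart from the positive cases it takes a genuinely different route from the paper's. For $n\le2$ the two arguments coincide in substance: the paper lifts permutations of $[k+1]$ to automorphisms of $\s(K_{k+1})$, shows this subdivision is edge-transitive, and passes to the line graph; since the edges of $\s(K_{k+1})$ are exactly the darts of $K_{k+1}$, this is literally your statement that arc-transitivity of $K_{k+1}$ lifts to vertex-transitivity of $\mathrm{Tr}(K_{k+1})=\li(\s(K_{k+1}))$. The divergence is in the hard direction ($n\ge3$, $k\ge3$). The paper argues locally: for the extreme vertex $\uu=(1,\ldots,1,1)$ and its non-extreme neighbour $\vv=(1,\ldots,1,2)$, it asserts that $\uu$ is a cut vertex of the subgraph induced by the ball of radius $3$ around it while $\vv$ is not, so no automorphism carries $\uu$ to $\vv$. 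You instead prove a global descent lemma: for a simple $k$-regular $G$ with $k\ge3$, matching edges of $\mathrm{Tr}(G)$ lie on no triangle while clique edges do (your triangle argument is sound, resting on each dart lying on exactly one matching edge and on $G$ being simple), so every automorphism of $\mathrm{Tr}(G)$ preserves the clique partition, descends to an automorphism of $G$, and acts on darts accordingly; hence $\mathrm{Tr}(G)$ is vertex-transitive if and only if $G$ is arc-transitive. You then apply this with $G=S^{++}(n-1,k)$, which is itself a truncation since $n-1\ge2$ and therefore contains both triangle edges and non-triangle edges, so $G$ is not even edge-transitive, let alone arc-transitive. What your route buys: a reusable structural lemma (in effect identifying $\mathrm{Aut}(S^{++}(n,k))$ with $\mathrm{Aut}(S^{++}(n-1,k))$ for $k\ge3$), a stronger intermediate conclusion (failure of edge-transitivity one level down), and a replacement of the paper's asserted-but-unverified cut-vertex computation inside $3$-balls by a transparent triangle invariant. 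What the paper's route buys: it is entirely elementary and local, requiring only inspection of a bounded neighbourhood and no truncation machinery.
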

\begin{proof}
We have $S^{++}(n,1)\cong K_2$, $S^{++}(1,k)\cong K_{k+1}$, and $S^{++}(n,2)$ is the cycle graph on $2^{n-1}\cdot3$ vertices, which are all vertex-transitive graphs. By Lemma \ref{lineiso}, $S^{++}(2,k)$ is isomorphic to $\li(\s(K_{k+1}))$.
In the graph $\s(K_{k+1})$, the `new' vertices  are in one-to-one correspondence with $2$-subsets of $[k+1]$.
It is then easy to see that that any permutation of $[k+1]$ induces an automorphism of  $\s(K_{k+1})$. Now,
for a given pair of edges of $\s(K_{k+1})$ which can be represented as $e=\{i,\{i,j\}\}$ and $e'=\{i',\{i',j'\}\}$,
the automorphism induced by a permutation $\sigma$ that $\sigma(i)=i'$ and $\sigma(j)=j'$, maps $e$ to $e'$.
It follows that   $\s(K_{k+1})$ is edge-transitive, and so $\li(\s(K_{k+1}))\cong S^{++}(2,k)$ is vertex-transitive.  Hence, assume that $n\geq3$ and $k\ge3$.
  We observe that an extreme vertex of a copy $\Delta$ of $S(n-1,k)$ in $\Gamma=S^{++}(n,k)$ cannot be mapped to a non-extreme vertex of  $\Delta$ by any automorphism of $\Gamma$.
    To be more precise, let $\uu=(1,\ldots,1,1)$ and $\vv=(1,\ldots,1,2)$.
    It can be seen that $\uu$ is a cut vertex for the induced subgraph by the vertices at distance at most $3$ form $\uu$,
    while $\vv$ is not a cut vertex for the induced subgraph by the vertices at distance at most $3$ form $\vv$.
    It follows that $\uu$ cannot be mapped to $\vv$ by any automorphism of $\Gamma$, and thus $\Gamma$ is not vertex-transitive.
\end{proof}

From Proposition~\ref{prop:transitive}, it follows that the graphs $S^{++}(n,k)$ for $n\ge3$ and $k\ge3$ cannot be Cayley graphs.
The graphs $S^{++}(n,1)$, $S^{++}(n,2)$, and $S^{++}(1,k)$ are all Cayley graphs. It remains to characterize what $S^{++}(2,k)$ are Cayley graphs for $k\ge3$.
This is our goal in the rest of this section.

\begin{definition}\em
Let $\Gamma$ be a graph and $\Delta$ a subgraph of $\Gamma$.
We say that $\Gamma$ is  {\em strongly $\Delta$-partitioned} if:
\begin{itemize}
\item[\rm(i)] The vertex set of $\Gamma$ is partitioned by the vertex sets of copies $\Delta_0,\ldots,\Delta_k$ of $\Delta$.
\item[\rm(ii)] Apart from $\Delta_0,\ldots,\Delta_k$, the graph $\Gamma$ contains no further copies of $\Delta$.
\end{itemize}
\end{definition}

By the way $S^{++}(n,k)$ is defined, it is constructed based on $k+1$ copies of $S(n-1,k)$.
The following proposition gives a structural property of $S^{++}(n,k)$
that it is indeed strongly $S(n-1,k)$-partitioned for $n\ge2$ and $k\ge3$.
Note that this is not the case for $k=2$ because $S^{++}(n,2)$, that is a cycle with $3\cdot2^{n-1}$ vertices, contains more than three copies of $S(n-1,2)$, which is a path on  $2^{n-1}$ vertices. Although we only need the case $n=2$ of the proposition, we state it in its full generality because it could be of independent interest.

\begin{proposition}\label{vertex partition}
Let $n\ge2$ and $k\ge3$. The graph $S^{++}(n,k)$ is strongly $S(n-1,k)$-partitioned.
\end{proposition}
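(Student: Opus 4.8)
The plan is to prove (i) directly from the definition and to reduce (ii) to a short counting argument driven by the edge-connectivity of the individual copies. For (i): by construction $\Gamma:=S^{++}(n,k)$ is the disjoint union of $k+1$ copies $\Delta_0,\dots,\Delta_k$ of $S(n-1,k)$ together with a set $M^*$ of bridging edges joining extreme vertices in distinct copies, exactly one edge for each of the $\binom{k+1}{2}$ pairs of copies. Since $S(n-1,k)$ has exactly $k$ extreme vertices, each of degree $k-1$, the edges of $M^*$ form a perfect matching on the set of all $(k+1)k$ extreme vertices; consequently $\Gamma\setminus M^*=\Delta_0\sqcup\cdots\sqcup\Delta_k$ and every vertex has degree $k$, so $\Gamma$ is $k$-regular. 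In particular the vertex sets of $\Delta_0,\dots,\Delta_k$ partition $V(\Gamma)$, which is (i).

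For (ii), let $\Delta'$ be a copy of $S(n-1,k)$ in $\Gamma$ with vertex set $W$; I read ``copy'' as an induced subgraph (for $n=2$ this is automatic, since any $S(1,k)=K_k$-subgraph is a $k$-clique and hence induced). Because $\Gamma$ is $k$-regular while $\Gamma[W]\cong S(n-1,k)$ has exactly $k$ vertices of degree $k-1$ and all others of degree $k$, the number of edges of $\Gamma$ leaving $W$ equals $\sum_{w\in W}\bigl(k-\deg_{\Gamma[W]}(w)\bigr)=k$. Thus \emph{exactly $k$ edges exit $W$}, and this is the only quantitative input I need about $\Delta'$.

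Next I set $W_i:=W\cap V(\Delta_i)$ and rule out ``straddling.'' The key ingredient is that $S(n-1,k)$ is maximally edge-connected, with edge-connectivity $\lambda\bigl(S(n-1,k)\bigr)=k-1$. Granting this, if $\emptyset\ne W_i\subsetneq V(\Delta_i)$ then the cut of $\Delta_i$ separating $W_i$ from $V(\Delta_i)\setminus W_i$ contains at least $k-1$ edges of $\Delta_i$; each such edge has one endpoint in $W_i\subseteq W$ and the other in $V(\Delta_i)\setminus W\subseteq V(\Gamma)\setminus W$, so it is one of the $k$ exit edges of $W$, and for distinct $i$ these edges are distinct (they lie in distinct copies). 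Hence the number of indices $i$ with $\emptyset\ne W_i\subsetneq V(\Delta_i)$ times $(k-1)$ is at most $k$; since $k<2(k-1)$ for $k\ge3$, at most one such index can occur. I would then close by a cardinality count: each $W_i$ is empty, proper, or all of $V(\Delta_i)$, with at most one proper. As $|W|=k^{n-1}=|V(\Delta_i)|$, a proper $W_{i_0}$ would give $k^{n-1}=|W_{i_0}|+w\,k^{n-1}$ with $w\ge0$ the number of full copies met, forcing $|W_{i_0}|=(1-w)k^{n-1}\notin(0,k^{n-1})$, a contradiction. So every $W_i$ is empty or full, and $|W|=k^{n-1}$ forces exactly one full copy, i.e. $W=V(\Delta_i)$, which is (ii).

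The main obstacle is precisely the input used in the third paragraph: the maximal edge-connectivity $\lambda\bigl(S(n-1,k)\bigr)=k-1$. The upper bound $\lambda\le k-1$ is immediate from the degree-$(k-1)$ extreme vertices, whereas the matching lower bound $\lambda\ge k-1$ is the (known) maximal connectivity of Sierpi\'nski graphs, which I would either cite from the survey of Hinz, Klav\v{z}ar, and Zemlji\v{c} \cite{hkz} or establish separately by induction on $n$. Everything else is robust degree and cardinality counting; it is worth noting that the decisive inequality $k<2(k-1)$ is exactly the place where the hypothesis $k\ge3$ enters, in agreement with the failure of the statement for the cycles $S^{++}(n,2)$ remarked on above.
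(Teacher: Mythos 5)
Your proof is correct, but it follows a genuinely different route from the paper's. The paper splits into cases: for $n=2$ it argues locally --- a vertex of a putative extra copy that is adjacent to two vertices of some $\Gamma_t$ must itself lie in $\Gamma_t$, since every vertex has at most one neighbour outside its own copy --- and for $n\ge3$ it uses that $S(n-1,k)$ is connected and bridgeless (every edge lies on a cycle), confines an extra copy $\Delta$ to $\Gamma[V(\Gamma_s)\cup V(\Gamma_t)]$ by a diameter argument, and derives a contradiction because the unique linking edge $uv$ would then be a bridge of $\Delta$. Your argument is uniform in $n$ and quantitative: it trades bridgelessness for the stronger input $\lambda(S(n-1,k))=k-1$ (maximal edge-connectivity), after which everything is degree and cardinality counting, and it pinpoints exactly where $k\ge3$ enters (the inequality $k<2(k-1)$), matching the known failure at $k=2$. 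The cost is that the edge-connectivity lemma must itself be supplied --- either cited or proved by the easy induction you indicate (an edge cut of size at most $k-2$ can disconnect neither any of the $k$ sub-copies nor the $K_k$ pattern of linking edges, so it disconnects nothing); this is comparable in effort to the paper's bridgelessness-plus-diameter inputs. Two small repairs: your restriction to \emph{induced} copies is unnecessary, and nominally weaker than what the paper proves (its proof allows arbitrary subgraph copies), but your own count already covers the general case, since for any copy with vertex set $W$ one has $e(W,V\setminus W)=k|W|-2e(\Gamma[W])\le k|W|-2e(S(n-1,k))=k$; and at the very end you should add the one-line observation that $W=V(\Delta_i)$ forces the copy to \emph{be} $\Delta_i$, as it is then a spanning subgraph of $\Gamma[W]=\Delta_i$ with the same number of edges.
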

\begin{proof}
Let $\Gamma:=S^{++}(n,k)$ and $\Gamma_0,\ldots,\Gamma_k$ be the $k+1$ copies of $S(n-1,k)$ used to construct $\Gamma$ by its definition.
Clearly, $V(\Gamma_0),\ldots,V(\Gamma_k)$ is a partition of $V(\Gamma)$.
We show that $\Gamma$ contains no more copies of $S(n-1,k)$. Let $\Delta$ be a subgraph of $\Gamma$ isomorphic to $S(n-1,k)$.

First, assume that $n=2$. Let $u\in V(\Delta)\cap V(\Gamma_t)$ for some $t$, with $0\leq t\leq k$. Since $u$ has at most one neighbor in $V(\Gamma)\setminus V(\Gamma_t)$ and $k\geq3$, there  exists another vertex $v\in V(\Delta)\cap V(\Gamma_t)$ adjacent to $u$. Now, if $w$ is any vertex of $\Delta$, then since $w$ is adjacent to two vertices $u$ and $v$ of $\Gamma_t$ it must belong to $V(\Gamma_t)$. Hence $V(\Delta)\subseteq V(\Gamma_t)$ and, consequently, $\Delta=\Gamma_t$.

Now, let $n\geq3$. Note that $S(n-1,k)$ is connected and has no bridges since every edge of $S(n-1,k)$ lies on a cycle (which can be seen by induction on $n$). If $\Delta\ne\Gamma_i$ for $i=0,\ldots,k$, then $\Delta$ shares its vertices with at least two $\Gamma_s$ and $\Gamma_t$.
By the definition, exactly one extreme vertex, say $u$, of $\Gamma_s$ is adjacent to exactly one extreme vertex, say $v$, of $\Gamma_t$.
Because of the connectivity, $\Delta$ must contain the edge $uv$. Note that
for any vertex $w$ outside $\Gamma_s$ and $\Gamma_t$, the distance between $w$ and either $u$ or $v$ is greater than the diameter of $S(n-1,k)$, and so
$w\not\in V(\Delta)$. It follows that $\Delta$ is a subgraph of $\Gamma':=\Gamma[V(\Gamma_s)\cup V(\Gamma_t)]$.
However, $uv$ is a bridge for $\Gamma'$ and thus a bridge for $\Delta$, a contradiction.
\end{proof}

The following lemma reveals the structure of strongly $\Delta$-partitioned Cayley graphs.
\begin{lemma}\label{Cayley partitioned graph: induced subgraph=coset}
Let $\Gamma$ be a Cayley graph with  a subgraph $\Delta$ such that $\Gamma$ is strongly $\Delta$-partitioned. Then, the vertex sets of the copies of $\Delta$ are all the right cosets of a subgroup of the underlying group of $\Gamma$.
\end{lemma}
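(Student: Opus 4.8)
The plan is to combine the regular action of the underlying group with the rigidity forced by the strong partition. Write $\Gamma=\Cay(G,C)$. The starting observation is that for every $g\in G$ the right translation $\rho_g\colon x\mapsto xg$ is an automorphism of $\Gamma$: with the adjacency convention $vu^{-1}\in C$, one has $(vg)(ug)^{-1}=vu^{-1}$, so $\rho_g$ preserves adjacency. Hence $R(G):=\{\rho_g:g\in G\}$ is a subgroup of $\operatorname{Aut}(\Gamma)$ acting regularly on the vertex set $G$, and I would take this right-regular action as the backbone of the argument.

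Next I would use condition (ii) of the strong partition to show that this action respects the partition. Any graph automorphism carries a subgraph isomorphic to $\Delta$ to another subgraph isomorphic to $\Delta$; but by (ii) the only copies of $\Delta$ in $\Gamma$ are $\Delta_0,\dots,\Delta_k$, so every automorphism — in particular every $\rho_g$ — must permute the set $\{\Delta_0,\dots,\Delta_k\}$, and therefore permute the blocks $V(\Delta_0),\dots,V(\Delta_k)$. In other words, this partition is a system of imprimitivity for the right-regular representation of $G$. This is exactly the point where the hypothesis does its work.

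The remaining step is the classical identification of a $G$-invariant partition of a regular action with a coset partition. Reindex so that the identity $1$ lies in $H:=V(\Delta_0)$. For $g\in H$ the block $\rho_g(H)=Hg$ contains $\rho_g(1)=g\in H$, so it meets $H$; since blocks are disjoint this forces $Hg=H$. From $Hg=H$ for all $g\in H$ one reads off closure ($a,g\in H\Rightarrow ag\in Hg=H$) and inverses ($1\in H=Hg\Rightarrow g^{-1}\in H$), so $H$ is a subgroup. Finally, for an arbitrary $g\in G$ the block containing $g$ is $\rho_g(H)=Hg$, the right coset of $H$ through $g$; conversely every right coset arises this way, so the blocks $V(\Delta_0),\dots,V(\Delta_k)$ are precisely the right cosets of $H$.

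The only genuinely load-bearing point will be the invariance step, where condition (ii) is indispensable: without the uniqueness of the copies of $\Delta$ an automorphism could scramble the partition and the conclusion would fail — as indeed it does for $S^{++}(n,2)$, which is why that case is excluded from Proposition~\ref{vertex partition}. Once invariance is established, the passage from an invariant partition to a coset partition is a standard fact about regular permutation groups, and the verification that $H$ is a subgroup is routine; no finiteness of $G$ is actually needed for it.
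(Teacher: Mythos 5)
Your proposal is correct and follows essentially the same route as the paper's proof: both exploit that right translations $\rho_g$ are automorphisms of $\Cay(G,C)$, invoke condition (ii) to force $\rho_g$ to permute the blocks $V(\Delta_0),\ldots,V(\Delta_k)$, deduce that the block $H$ containing $1$ satisfies $Hg=H$ for $g\in H$ (the paper writes this as $Xx^{-1}=X$, hence $XX^{-1}=X$), and conclude that $H$ is a subgroup whose right cosets are exactly the blocks. Your phrasing in terms of a system of imprimitivity is just a more explicit packaging of the same argument.
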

\begin{proof}
Let $\Gamma$ be a Cayley graph on a group $G$, and $X\subseteq G$ be such that $1\in X$ and $\Gamma[X]$, the subgraph of $\Gamma$ induced by $X$, is isomorphic to $\Delta$. Since for any $x\in X$, $\Gamma[Xx^{-1}]$ is isomorphic to $\Delta$ and $1\in Xx^{-1}$, from the hypothesis of the lemma, it follows that $Xx^{-1}=X$. Thus $XX^{-1}=X$ and, hence, $X$ is a subgroup of $G$. As for any $g\in G$, $\Gamma[Xg]$ is isomorphic to $\Delta$ and the sets $Xg$ cover all elements of $G$, it follows that every induced subgraph of $\Gamma$ isomorphic to $\Delta$ is a right coset of $X$, as required.
\end{proof}

\begin{definition}\em
Let $\Gamma$ be a strongly $\Delta$-partitioned graph. We say that $\Gamma$ has {\em connection constant} $c$ if there are exactly $c$ edges between any two copies of $\Delta$ in $\Gamma$. We denote the set of all strongly $\Delta$-partitioned graphs with connection constant $c$ by $\SP_c(\Delta)$.
\end{definition}

\begin{remark}\rm  The family $\SP_1(K_d)$ contains an only regular graph. However, this is not the case in any regular graph $\Delta$.
 If $\Gamma\in\SP_1(\Delta)$ is a regular graph with $\Delta$ being a $d$-regular graph on $k$ vertices, then $\Gamma$ necessarily contains $k+1$ copies of $\Delta$ and thus $\Gamma$ is $(d+1)$-regular with $k(k+1)$ vertices. For instance, in the case in which $\Delta$ is $C_4$, the cycle on $4$ vertices, $\Gamma$ is a cubic graph on $20$ vertices.
By a computer search, we found all the regular graphs in $\SP_1(C_4)$. It turned out that there are seven non-isomorphic such graphs, among which only one is a Cayley graph.
\end{remark}

Here we recall some notions from group theory that will be used in what follows. Let $G$ be a finite group and $H$ be a nontrivial proper subgroup of $G$. The conjugate of $H$ by an element $g$ of $G$ is defined as $H^g=\{h^g:h\in G\}$, where $h^g:=g^{-1}hg$ denotes the conjugate of $h$ by $g$. The group $G$ is called a \textit{Frobenius group} with \textit{Frobenius complement} $H$ if $H\cap H^g=\{1\}$ for all $g\in G\setminus H$. A celebrated theorem of Frobenius states that  $N:=G\setminus\bigcup_{g\in G}(H\setminus\{1\})^g$ is a normal subgroup of $G$, called the Frobenius kernel of $G$, satisfying $G=NH$ and $N\cap H=\{1\}$, that is, $G=N\rtimes H$ is a semidirect product of $N$ by $H$ (see \cite[8.5.5]{djsr}). The other concepts we use in the following are standard and can be found in Robinson \cite{djsr}.
\begin{theorem}\label{Cayley pseudo-Sierpinski graph with c=1}
Suppose that $\Gamma$ and $\Delta$ are two regular graphs and $\Gamma\in\SP_1(\Delta)$.
If $\Gamma$ is a Cayley graph $\Cay(G,C)$, then $|\Delta|+1=p^m$ is a prime power, $G=N\rtimes H$ is a Frobenius group with minimal normal Frobenius kernel $N\cong\Bbb{Z}_p^m$ and Frobenius complement $H$, $C=C'\cup\{c\}$ with $\Delta\cong\Cay(H,C')$ and $c^2=1$, and either
\begin{itemize}
\item[\rm(i)]$c\in N$ and $H=\gen{C'}$, or
\item[\rm(ii)]$c=h^n$ for some $h\in H\setminus\{1\}$ and $n\in N\setminus\{1\}$, and $H=\gen{C',h}$.
\end{itemize}
Conversely, if $\Delta$ satisfies the above conditions, then $\Cay(G,C)\in\SP_1(\Delta)$.
\end{theorem}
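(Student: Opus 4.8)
The plan is to translate the combinatorial hypothesis into group theory via Lemma~\ref{Cayley partitioned graph: induced subgraph=coset}, extract a Frobenius structure from the connection constant, and only then locate the distinguished element $c$. First I would write $\Gamma=\Cay(G,C)$ and let $\Delta$ be $d$-regular on $k$ vertices. By the preceding remark, $\Gamma$ is $(d+1)$-regular with $k+1$ copies of $\Delta$ and $k(k+1)$ vertices, and by Lemma~\ref{Cayley partitioned graph: induced subgraph=coset} these copies are exactly the right cosets of a subgroup $H\le G$ with $\Gamma[H]\cong\Delta$; hence $\Delta\cong\Cay(H,C')$ for $C':=C\cap H$, with $|H|=k$ and $[G:H]=k+1$. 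Inside each coset the induced degree is $d$, so each vertex has a single neighbour outside its coset, i.e. $|C\setminus H|=1$, say $C\setminus H=\{c\}$. Since $C=C^{-1}$ and $c\notin H$, the inverse $c^{-1}$ again lies in $C\setminus H=\{c\}$, whence $c^2=1$ and $C=C'\cup\{c\}$.

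The heart of the forward direction, and the step I expect to be the main obstacle, is promoting the connection constant into sharp $2$-transitivity of the coset action of $G$ on $\Omega:=H\backslash G$. Note that the $c$-edges $\{u,cu\}$ form a perfect matching $M$ (as $c^2=1$) carrying every between-coset edge, and ``connection constant $1$'' says each pair of cosets meets $M$ in exactly one edge. The key observation is that right multiplications are automorphisms of $\Gamma$ preserving $M$, and $G$ acts transitively on the \emph{ordered} matching edges $(u,cu)$ (send one to another by right multiplication by $u^{-1}u'$). Since the assignment $(u,cu)\mapsto(Hu,Hcu)$ is equivariant for the coset action and, by connection constant $1$, surjects onto all ordered pairs of distinct cosets, the coset action is $2$-transitive. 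Because $|G|=k(k+1)=|\Omega|(|\Omega|-1)$, the two-point stabilizers are trivial, so the action is sharply $2$-transitive and $G$ is a Frobenius group with complement the point stabilizer $H$. Frobenius's theorem (\cite[8.5.5]{djsr}) then yields the kernel with $G=N\rtimes H$ and $|N|=k+1$. Finally $H$ acts by conjugation transitively on $N\setminus\{1\}$; combined with nilpotency of the Frobenius kernel and the $H$-invariance of $Z(N)$, this forces $N$ abelian, then elementary abelian and minimal normal, giving $N\cong\Bbb{Z}_p^m$ and $k+1=|\Delta|+1=p^m$. I stress that connection constant $1$ alone only yields $H\cap H^c=\{1\}$ for the single element $c$; it is the homogeneity of $M$ under right multiplication that upgrades this to the full Frobenius condition.

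Next I would locate $c$. In a Frobenius group $G\setminus N$ is the disjoint union of the sets $H^n\setminus\{1\}$ for $n\in N$, so either $c\in N$ (case (i)) or $c\in H^n$ for a unique $n$; since $c\neq1$ and $c\notin H$, the latter gives $c=h^n$ with $h\in H\setminus\{1\}$ and $n\in N\setminus\{1\}$ (case (ii)). The generation statements come from connectedness of $\Gamma$, i.e. $\gen{C}=G$, by projecting along $\pi\colon G\to G/N\cong H$: one has $\gen{\pi(C)}=H$ and $\pi|_H$ an isomorphism with $\gen{C'}\le H$, so in case (i) $\pi(c)=1$ forces $\gen{C'}=H$, while in case (ii) $\pi(c)=\pi(h)$ forces $\gen{C',h}=H$. (Connectedness holds in the relevant case $\Gamma=S^{++}(2,k)$; I would state it as the standing hypothesis $\gen{C}=G$.)

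For the converse I would verify that $C=C'\cup\{c\}$ is a legitimate connection set ($1\notin C$, $C=C^{-1}$), that each right coset of $H$ induces $\Cay(H,C')\cong\Delta$, and that the $c$-edges meet each pair of cosets in exactly one edge because $H\cap H^c=\{1\}$ follows from the Frobenius hypothesis (as $c\notin H$). The remaining point, and the technical obstacle of the converse, is that $\Cay(G,C)$ contains no \emph{further} copies of $\Delta$. Here I would use a rigidity argument: a copy $S\cong\Delta$ is connected and $d$-regular, so at each vertex it omits exactly one of the $d+1$ incident edges; since the only edges between distinct cosets are matching edges, each used as a bridge, a copy straddling two or more cosets would have to be joined in a tree by such bridges, which is impossible for a bridgeless $\Delta$ such as $S(n-1,k)$. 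Hence $S$ lies in a single coset, completing $\Cay(G,C)\in\SP_1(\Delta)$.
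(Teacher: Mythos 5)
Your \emph{forward direction} is correct, and its central step takes a genuinely different route from the paper's. The paper never passes through permutation-group language: it computes $1=|E(\Delta_0,\Gamma[Hc])|=|H\cap H^c|$ directly, checks that the cosets $Hch$ ($h\in H$) are pairwise disjoint so that $G=H\cup\bigcup_{h\in H}Hch$, and then conjugates: any $g\in G\setminus H$ equals $hch'$, whence $H\cap H^g=(H\cap H^c)^{h'}=\{1\}$. Your argument --- transitivity of right translations on ordered matching edges, equivariance of $(u,cu)\mapsto(Hu,Hcu)$, surjectivity from connection constant $1$, and sharpness from $|G|=|\Omega|(|\Omega|-1)$ --- is a sound, more conceptual repackaging of the same counting, and it pays a dividend afterwards: where the paper pins down $N$ by taking a minimal characteristic subgroup $N_0$ of the (Thompson-)nilpotent kernel and excluding $N_0\neq N$ via the divisibility constraint $|H|$ divides $|N_0|-1$ (Robinson, Exercises 8.5(6)), you get transitivity of $H$ on $N\setminus\{1\}$ for free from sharp $2$-transitivity, after which abelianness (via $Z(N)$), elementary abelianness, minimal normality, and $k+1=p^m$ all follow at once. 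Both proofs use Thompson's nilpotency theorem, and both obtain the dichotomy (i)/(ii) from the Frobenius partition of $G\setminus N$ (the paper phrases (ii) as $c^n\in H\setminus\{1\}$, which is the same statement).

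On the generation claims $H=\gen{C'}$ resp.\ $H=\gen{C',h}$: you correctly observe that these need $\gen{C}=G$. Be aware that the paper has exactly the same issue, hidden in the unproved assertion ``$G\subseteq N\gen{C'}$''; since $N\gen{C'}/N\cong\gen{C'}$, that assertion is literally equivalent to the conclusion $H=\gen{C'}$, so it is only justified if one assumes $\gen{C}=G$. Declaring $\gen{C}=G$ a standing hypothesis does, however, change the statement; to prove the theorem as written one should derive connectivity from the $\SP_1$ hypothesis itself (if $\gen{C'}<H$ then $\Delta$ is disconnected, and one can assemble ``mixed'' copies of $\Delta$ out of $\gen{C'}$-cosets taken from distinct $H$-cosets with no matching edges between them, violating the no-further-copies condition). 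Neither you nor the paper does this, so here you are at worst on equal footing, and more transparent.

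The genuine gap is in your \emph{converse}. You claim that a copy $S\cong\Delta$ meeting several cosets has its pieces ``joined in a tree'' by matching edges, each a bridge of $S$. Connection constant $1$ only makes the quotient graph $Q$ (vertices: the cosets met by $S$; edges: the matching edges used by $S$) a \emph{simple} connected graph; nothing prevents $Q$ from containing cycles, and a matching edge lying on a cycle of $Q$ need not be a bridge of $S$, so bridgelessness of $\Delta$ does not close the argument. For the case actually needed downstream ($\Delta=K_k$, $k\geq3$, in Theorem~\ref{Cayley S++(n,k)}) there is a clean substitute: no matching edge lies on a triangle, because a common neighbour of $u$ and $cu$ would give $x'u=y'cu$ with $x',y'\in C'$, forcing $c=y'^{-1}x'\in H$; hence every copy of $K_k$ lies inside a single coset. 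Finally, your restriction to connected, bridgeless $\Delta$ is not a cosmetic defect of your write-up: for disconnected $\Delta$ the converse as stated is actually false (take $H=\gen{g}\cong\Bbb{Z}_6$ acting on $N\cong\Bbb{Z}_7$, $C'=\{g^2,g^4\}$, $h=g^3$, $c=h^n$; all hypotheses of (ii) hold, yet $\Cay(G,C)$ contains many more than seven copies of $\Delta\cong 2K_3$), so the paper's ``the converse is straightforward'' is itself too hasty --- but within your framework the tree claim still needs repair.
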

\begin{proof}
Let $\Gamma=\Cay(G,C)$, and $\Delta_0,\Delta_1,\ldots,\Delta_k$ be the copies of $\Delta$ in $\Gamma$.
As there is exactly one edge between any two copies of $\Delta$, it is observed that $|\Delta|=k$ and $\Gamma$ is $(d+1)$-regular if $\Delta$ is $d$-regular. Let $H:=V(\Delta_0)$ and assume, without loss of generality, that $1\in H$. By Lemma \ref{Cayley partitioned graph: induced subgraph=coset}, $H$ is a subgroup of $G$. Let $C'$ be the neighborhood of $1$ in $\Gamma[H]$. Since $\Gamma$ is $(d+1)$-regular, besides the elements of $C'$, the vertex $1$ has exactly one other neighbor, say $c\in G\setminus H$. So $C=C'\cup\{c\}$. Since $H$ is a subgroup of $G$, $C'^{-1}\subseteq H$, which implies that $C'^{-1}=C'$. Thus, $c=c^{-1}$ is an involution. Clearly, $Hc\neq H$ so that $\Gamma[Hc]=\Delta_i$ for some $1\leq i\leq k$. On the other hand,
\[1=|E(\Delta_0,\Delta_i)|=|\{\{h,ch\}:h\in H\cap H^c\}|=|H\cap H^c|,\]
from which it follows that $H\cap H^c=\{1\}$. Now, a simple verification shows that $Hch\cap Hch'=\emptyset$ for all distinct elements $h,h'\in H$. Since $\Gamma[H]=\Delta_0$ and $\Gamma[Hch]$ ($h\in H$) are equal to $\Delta_1,\ldots,\Delta_k$ in some order, we must have
\[G=H\cup\bigcup_{h\in H}Hch,\]
where the unions are disjoint. As a result, every element $g\in G\setminus H$ can be written as $g=hch'$ for some $h,h'\in H$, from which it follows that
\[H\cap H^g=(H^{{h'}^{-1}}\cap (H^h)^c)^{h'}=(H\cap H^c)^{h'}=\{1\}^{h'}=\{1\}.\]
Hence, $G$ is a Frobenius group with complement $H$. Let $N$ be the Frobenius kernel of $G$. By \cite[10.5.1(i)]{djsr}, $N$ is nilpotent. Let $N_0$ be a nontrivial characteristic subgroup of $N$ with minimum order. Then $N_0$ is a normal subgroup of $G$ (see \cite[1.5.6(iii)]{djsr}). Note that $N_0$ is an elementary Abelian $p$-group for $N_0$ is nilpotent and the subgroup of $N_0$ generated by central elements of a given prime order $p$ dividing $|Z(N_0)|$ is a characteristic subgroup of $N_0$ and hence of $N$ (see \cite[1.5.6(ii)]{djsr}). If $N\neq N_0$, then $N_0H$ is a Frobenius group for $N_0H$ is a subgroup of $G$ and $H\cap H^g=1$ for all $g\in N_0H\setminus H$. Moreover, as a proper subgroup of $N$, $|N_0|\leq|N|/2\leq(k+1)/2$ and hence $|N_0|-1$ is not divisible by $|H|=k$ contradicting \cite[Exercises 8.5(6)]{djsr}. Thus $N=N_0$ so that $k+1=|N|=p^m$ is a prime power for some $m\geq1$. Note that $N$ is a minimal normal subgroup of $G$ for if $N$ contains a nontrivial normal subgroup $N_0$ of $G$ properly, then $N_0H$ would be a Frobenius group which leads us to the same contradiction as above. If $c\in N$, then since $G\subseteq N\gen{C'}$ it follows that $H=\gen{C'}$. Now assume that $c\notin N$. Then $c^n\in H\setminus\{1\}$ for some $n\in N\setminus\{1\}$. As $G\subseteq N\gen{C',c^n}$ it follows that $H=\gen{C',c^n}$, as required. The converse is straightforward.
\end{proof}

We are now in a position to conclude the main result of this section.
\begin{theorem}\label{Cayley S++(n,k)}
The graph $S^{++}(n,k)$  is a Cayley graph if and only if either
\begin{itemize}
\item[\rm(i)] $n=1$, 
\item[\rm(ii)] $k\le2$, or 
\item[\rm(iii)] $n=2$ and $k+1=p^m$ is a prime power. 
\end{itemize}
Furthermore, in the case {\rm(iii)}, we have
\[S^{++}(n,k)\cong\Cay(G,(H\setminus\{1\})\cup\{c\}),\]
for every Frobenius group $G$ with complement $H$ of order $p^m-1$, elementary Abelian minimal normal Frobenius kernel of order $p^m$, and involution $c\in G\setminus H$.
\end{theorem}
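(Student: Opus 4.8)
The plan is to separate the three easy regimes from the one substantial case and reduce everything to the previous theorem. Cases (i) and (ii) are immediate: $S^{++}(1,k)\cong K_{k+1}$, $S^{++}(n,1)\cong K_2$, and $S^{++}(n,2)$ is a cycle, all of which are Cayley graphs. For the converse implication, since every Cayley graph is vertex-transitive, Proposition~\ref{prop:transitive} forces any Cayley $S^{++}(n,k)$ to satisfy $n\le2$ or $k\le2$; the only sub-case not already covered is $n=2$ with $k\ge3$. Thus everything reduces to deciding when $S^{++}(2,k)$ is a Cayley graph, which is exactly where Theorem~\ref{Cayley pseudo-Sierpinski graph with c=1} applies.

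The first key step is to place $S^{++}(2,k)$ in the framework of that theorem. Since $S(1,k)=K_k$, Proposition~\ref{vertex partition} shows $S^{++}(2,k)$ is strongly $K_k$-partitioned, and the defining $K_{k+1}$-pattern among the $k+1$ copies means exactly one edge joins any two copies; hence $S^{++}(2,k)\in\SP_1(K_k)$, with both $S^{++}(2,k)$ and $K_k$ regular. Applying the forward direction of Theorem~\ref{Cayley pseudo-Sierpinski graph with c=1} with $\Delta=K_k$ (so $|\Delta|=k$) then yields the necessity in (iii): if $S^{++}(2,k)$ is Cayley, then $k+1=p^m$ is a prime power and the underlying group has the asserted Frobenius structure with $\Delta\cong\Cay(H,C')$. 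Because $K_k$ is complete, this forces $C'=H\setminus\{1\}$ and $|H|=k=p^m-1$, so the connection set is $(H\setminus\{1\})\cup\{c\}$ with $c$ an involution outside $H$.

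For sufficiency and the explicit form, I would first establish a rigidity statement: up to isomorphism, $\SP_1(K_k)$ contains only $S^{++}(2,k)$. Indeed, for any regular $\Gamma\in\SP_1(K_k)$ each vertex has internal degree $k-1$ and hence exactly one external edge; sending a vertex $v$ in the $i$-th copy to $(i,j)$, where $j$ names the copy reached by that external edge, is then a bijection (injectivity uses the connection constant $1$), and one checks it carries the clique-plus-matching structure of $\Gamma$ exactly onto that of $S^{++}(2,k)$. This rigidity is special to the complete graph and fails in general, as the remark on $\SP_1(C_4)$ shows. Granting it, whenever $k+1=p^m$ I would exhibit a concrete Frobenius group — for instance $G=N\rtimes H$ with $N\cong\mathbb{Z}_p^m$ and $H$ cyclic of order $p^m-1$ acting as the multiplicative group of $\mathbb{F}_{p^m}$ — together with an involution $c\in G\setminus H$ (a nonzero translation when $p=2$, or $x\mapsto -x+b$ when $p$ is odd). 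With $C'=H\setminus\{1\}$ and $C=C'\cup\{c\}$, the converse half of Theorem~\ref{Cayley pseudo-Sierpinski graph with c=1} gives $\Cay(G,C)\in\SP_1(K_k)$, whence rigidity yields $\Cay(G,C)\cong S^{++}(2,k)$, proving (iii). The same argument gives the \emph{furthermore} clause: for an arbitrary admissible triple $(G,H,c)$ the involution $c$ lies either in $N$ (case (i) of Theorem~\ref{Cayley pseudo-Sierpinski graph with c=1}, with $H=\langle C'\rangle$ automatic) or, being outside both $N$ and $H$, is $N$-conjugate to a nontrivial element of $H$ (case (ii), using that in a Frobenius group $G\setminus N=\bigcup_{n\in N}(H\setminus\{1\})^n$); either way $\Cay(G,(H\setminus\{1\})\cup\{c\})\in\SP_1(K_k)\cong\{S^{++}(2,k)\}$.

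The main obstacle I expect is the rigidity lemma for $\SP_1(K_k)$: it is what upgrades the converse of Theorem~\ref{Cayley pseudo-Sierpinski graph with c=1} — which only produces \emph{some} member of $\SP_1(K_k)$ — into a statement about the specific graph $S^{++}(2,k)$, and it is also what makes the ``for every such $G,H,c$'' assertion uniform. A secondary point requiring care is verifying that every involution $c\in G\setminus H$ genuinely falls into case (i) or (ii) above, so that the converse of the previous theorem indeed applies in each instance.
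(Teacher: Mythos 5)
Your proposal follows essentially the same route as the paper's proof: the reduction to $n=2$, $k\ge3$ via Proposition~\ref{prop:transitive}; necessity via Proposition~\ref{vertex partition}, Lemma~\ref{Cayley partitioned graph: induced subgraph=coset}, and the forward half of Theorem~\ref{Cayley pseudo-Sierpinski graph with c=1} (with $C'=H\setminus\{1\}$ forced by completeness); and sufficiency witnessed by the one-dimensional affine group $\mathbb{F}_q^*\ltimes\mathbb{F}_q$, which is exactly your ``cyclic complement acting as $\mathbb{F}_{p^m}^*$'' example (the paper takes $c=(-1,-1)$). The only organizational difference is that the paper checks the affine Cayley graph against the definition of $S^{++}(2,k)$ by hand, leaving the clique-plus-matching rigidity implicit, whereas you isolate rigidity as a lemma and route both sufficiency and the ``furthermore'' clause through the converse half of Theorem~\ref{Cayley pseudo-Sierpinski graph with c=1}; this is a presentational, not a substantive, difference, and your observation that every involution $c\in G\setminus H$ falls under case (i) or (ii) (via the Frobenius partition $G=N\cup\bigcup_{n\in N}(H\setminus\{1\})^n$) is correct and is a point the paper glosses over.

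One step does need repair: your rigidity lemma is false as stated. The inference ``regular, internal degree $k-1$, hence exactly one external edge'' breaks down because regularity alone does not force the degree to be $k$: a regular member of $\SP_1(K_k)$ may have external degree $t\ge2$ and $kt+1$ parts. Concretely, take vertex set $\mathbb{Z}_7\times\{1,2,3\}$, let each $\{i\}\times\{1,2,3\}$ induce a triangle, and add the external edges $(i,\ell)\sim(i+\ell,\ell)$ for $\ell=1,2,3$. There is exactly one edge between any two triangles, and the external edges form three $7$-cycles, so no new triangles arise; hence this graph lies in $\SP_1(K_3)$, yet it is $4$-regular on $21$ vertices and is not $S^{++}(2,3)$. (The same overstatement appears in the paper's own remark that $\SP_1(K_d)$ contains only one regular graph.) The fix is local and costs nothing in your argument: state rigidity for graphs in $\SP_1(K_k)$ that are $k$-regular, equivalently whose partition has $k+1$ parts so that each vertex has exactly one external edge; your labeling map $v\mapsto(i,j)$ then works verbatim. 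This restricted version is all you ever use, since $\Cay(G,(H\setminus\{1\})\cup\{c\})$ is $k$-regular and is strongly partitioned by the $k+1$ right cosets of $H$. With that correction your proof is complete and matches the paper's.
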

\begin{proof}
By Proposition~\ref{prop:transitive}, $S^{++}(n,k)$ for $n\ge3$ and $k\ge3$ is not a Cayley graph.
As mentioned above, $S^{++}(n,1)$, $S^{++}(n,2)$, and $S^{++}(1,k)$ are all Cayley graphs.
So, we may assume that $n=2$ and $k\ge3$.

First, we show that $S^{++}(2,q-1)$ are Cayley graphs for all prime powers $q$. Let $\Bbb{F}_q$ denote the finite field with $q$ elements. Then $G:=\Bbb{F}^*_q\times\Bbb{F}_q$ together with the multiplication
\[(x,a)\cdot(y,b)=(xy,xb+a),\]
forms a group known as {\em one dimensional affine group}. We show that $S^{++}(2,q-1)\cong\Cay(G,C)$, where $C=\left\{(x,0): 1\ne x\in\Bbb{F}^*_q\right\}\cup\{(-1,-1)\}$. To this end, let $H:=\{(x,0): x\in\Bbb{F}^*_q\}$ be a subgroup of $G$ of order $q-1$. Then $H$ has $q$ right cosets each of which induces a complete subgraph in $\Cay(G,C)$ for $h'g(hg)^{-1}=h'h^{-1}\in C$ for all distinct elements $hg$ and $h'g$ of a right coset $Hg$ of $H$. Since $(x,0)(1,ax^{-1})=(x,a)$ covers all elements of $G$ when $x$ and $a$ ranges over $\Bbb{F}^*_q$ and $\Bbb{F}_q$, respectively, it follows that every right coset of $H$ has a representative of the form $(1,b)$ for some $b\in \Bbb{F}_q$. Let $Hg$ and $Hg'$ be distinct right cosets of $H$ with $g=(1,a)$ and $g'=(1,a')$. Then an element $hg$ of $Hg$ is adjacent to an element $h'g'$ of $Hg'$ if and only if $h'g'g^{-1}h^{-1}=(h'g')(hg)^{-1}\in C$ or equivalently $(h'g')(hg)^{-1}=(-1,-1)$ as $g'g^{-1}\notin H$. A simple verification shows that this equation has a unique solution for $(h,h')$ so that there is a unique edge between any two right cosets of $H$. Indeed, $h=(x,0)$ and $h'=(x',0)$ satisfy the equation if and only if $-x'=x=(a'-a)^{-1}$. Hence, from the definition, it follows that $S^{++}(2,q-1)\cong\Cay(G,C)$.

Now, assume that $\Gamma:=S^{++}(2,k)\cong\Cay(G,C)$ be a presentation of $S^{++}(2,k)$ as a Cayley graph. By Proposition~\ref{vertex partition}, $\Gamma$ is strongly $\Gamma_0$-partitioned for some complete subgraph $\Gamma_0$ of $\Gamma$ of order $k$. Let $H:=V(\Gamma_0)$ and assume that $1\in H$. We know from Lemma \ref{Cayley partitioned graph: induced subgraph=coset} that $H$ is a subgroup of $G$.
By Theorem~\ref{Cayley pseudo-Sierpinski graph with c=1}, $k+1=p^m$ is a prime power, $G=N\rtimes H$ is a Frobenius group with Frobenius kernel $N$ and Frobenius complement $H$ such that $N\cong \Bbb{Z}_p^m$ is a minimal normal subgroup of $G$, $C=C'\cup\{c\}$, $C'^{-1}=C'\subseteq H$, $c^2=1$, and either
\begin{itemize}
\item[\rm(a)]$c\in N$ and $H=\gen{C'}$, or
\item[\rm(b)]$c=h^n$ for some $h\in H\setminus\{1\}$ and $n\in N\setminus\{1\}$, and $H=\gen{C',h}$.
\end{itemize}
Since $\Gamma_0$ is a complete graph, we must have $H\setminus\{1\}\subseteq C$. Then, (a) and (b) together are equivalent to say that $c\in G\setminus H$. The proof is now complete.
\end{proof}

As a generalization of Theorem~\ref{Cayley pseudo-Sierpinski graph with c=1}, we pose the following problem.

\begin{problem}\rm
Let $\Delta$ be a regular graph. Classify all Cayley graphs in $\SP_c(\Delta)$ for $c\ge2$.
\end{problem}

\section{New non-Cayley numbers}\label{sec:NC}

In this final section, we give a partial answer to a famous rather old open problem in algebraic graph theory. A positive integer $n$ is called a \textit{Cayley number} if all vertex-transitive graphs of order $n$ are Cayley graphs. Maru\v{s}i\v{c} \cite{dm} in 1983 posed the problem of characterizing the set $\NC$ of all non-Cayley numbers. Since disjoint unions of copies of vertex-transitive (non-Cayley) graphs are again vertex-transitive (non-Cayley) graphs, it follows that every multiple of a non-Cayley number is again a non-Cayley number.
Hence the problem of determining $\NC$ reduces to finding `minimal' non-Cayley numbers. It is well-known that all primes are Cayley numbers. Following a series of papers by various authors, McKay and Praeger \cite{bdm-cep2} and Iranmanesh and Praeger \cite{mai-cep} provided necessary and sufficient conditions under which the product of two and three distinct primes is a Cayley number, respectively. In the same paper, McKay and Praeger established the following remarkable result determining all non-square-free Cayley numbers.
\begin{theorem}[McKay and Praeger \cite{bdm-cep2}]
Let $n$ be a positive integer that is divisible by the square of a prime $p$. Then $n\in\NC$ unless $n=p^2$, $n=p^3$, or $n=12$.
\end{theorem}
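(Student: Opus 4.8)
The plan is to pass from graphs to permutation groups via Sabidussi's criterion and then prove the two halves of the ``unless'' separately: that $p^2$, $p^3$, and $12$ are Cayley numbers, and that every other square-divisible order is a non-Cayley number. Recall that a vertex-transitive graph $\Gamma$ is a Cayley graph if and only if $\mathrm{Aut}(\Gamma)$ contains a subgroup acting regularly (sharply transitively) on $V(\Gamma)$. Since the automorphism group of any graph is $2$-closed, and conversely every $2$-closed group is the automorphism group of an invariant (coloured) graph, $n$ is a Cayley number exactly when every $2$-closed transitive permutation group of degree $n$ contains a regular subgroup, and $n$ is a non-Cayley number exactly when some $2$-closed transitive group of degree $n$ has none. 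Thus I would aim to (B) establish the former for $n\in\{p^2,p^3,12\}$, and (A) construct, for every square-divisible $n\notin\{p^2,p^3,12\}$, a vertex-transitive graph whose full automorphism group admits no regular subgroup.

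For Direction (B), the order $n=p^2$ is clean: a Sylow $p$-subgroup $P$ of a transitive group $G$ of degree $p^2$ is itself transitive, because the index $[P:P\cap G_\alpha]$ is forced to equal exactly $p^2$, and a transitive $p$-group of degree $p^2$ always contains a regular subgroup of order $p^2$; hence every vertex-transitive graph of order $p^2$ is Cayley. The orders $p^3$ and $12$ are genuinely sporadic, because there a transitive group may fail to contain a regular subgroup while its $2$-closure does not; so the argument must be about $2$-closures rather than arbitrary transitive groups. For $n=p^3$ I would stratify the transitive groups of degree $p^3$ by their imprimitivity structure (and, where the socle is non-abelian, by the classification of primitive groups of prime-power degree) and check that in each case the automorphism group of an invariant graph regains a regular subgroup; for $n=12$ the claim reduces to a finite verification over the transitive groups of degree $12$.

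Direction (A) is the bulk of the work and, I expect, the main obstacle. Using the fact recorded earlier that every multiple of a non-Cayley number is again a non-Cayley number, it suffices to exhibit non-Cayley vertex-transitive graphs on a family of ``base'' orders whose multiples sweep out all admissible $n$. Because the square-divisible orders not produced by multiplicativity from smaller ones—such as $p^4$, $p^2q^2$, and $p^2q$ with $q\neq p$—form an infinite set, this demands explicit infinite \emph{families} of constructions, not finitely many base cases. For each such order I would build an imprimitive graph carrying a transitive $p$-group action on its blocks, chosen so that no sharply transitive subgroup can exist, the obstruction arising from a clash between the cyclic or elementary-abelian structure any regular subgroup would need and the imposed block system. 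The delicate point, and the crux of the entire proof, is \emph{$2$-closure control}: passing from the constructed transitive group to the full automorphism group of the graph can only make a regular subgroup easier to find, so for each family I must certify that the automorphism group is still regular-subgroup-free, typically by realizing the graph as the unique nontrivial invariant relation of the intended group.

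Finally I would verify the covering: every positive integer divisible by a prime square, other than $p^2$, $p^3$, and $12$, is either settled by a direct construction above or is a multiple of an order already shown to be non-Cayley, with separate care for the borderline orders $2p^2$, $4p$, and $p^2q$ lying adjacent to the three exceptions, so that none of $p^2$, $p^3$, $12$ is inadvertently forced to be non-Cayley. The hardest single step throughout remains guaranteeing regular-subgroup-freeness after enlarging to the full automorphism group in Direction (A).
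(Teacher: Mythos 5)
First, a point of order: the paper does not prove this statement at all --- it is quoted from McKay and Praeger \cite{bdm-cep2} as a known classification and is used in Section~\ref{sec:NC} only to reduce the determination of $\NC$ to square-free orders. So your proposal must stand on its own as a proof of a substantial research theorem, and it does not: it is a research plan in which every mathematically substantive step is deferred rather than carried out. In Direction (A) no graph is ever constructed; ``I would build an imprimitive graph \dots chosen so that no sharply transitive subgroup can exist'' names the desired object without exhibiting it, and you yourself identify $2$-closure control --- certifying that the \emph{full} automorphism group of each constructed graph still has no regular subgroup --- as the crux, yet you do not carry it out for a single family. Direction (B) is in no better shape: that $p^3$ is a Cayley number is a deep theorem of Maru\v{s}i\v{c} requiring a detailed analysis of the transitive groups of degree $p^3$, and ``stratify \dots and check that in each case'' is not that analysis; likewise ``reduces to a finite verification'' for $n=12$ appeals to a computation you have not done. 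A smaller inaccuracy: your claimed equivalence ``$n$ is a non-Cayley number exactly when some $2$-closed transitive group of degree $n$ has no regular subgroup'' is suspect in one direction, since a $2$-closed group is the automorphism group of a coloured digraph, not necessarily of a simple graph; this happens not to be load-bearing for your plan, but it should not be asserted as an equivalence.

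Second, there is a concrete hole in the covering argument even at the purely arithmetic level. Since non-Cayley-ness only propagates to multiples, you must give a direct construction for every order $n$ (divisible by a prime square, $n\notin\{p^2,p^3,12\}$) none of whose proper divisors is a non-Cayley number. Your base shapes --- $p^4$, $p^2q^2$, $p^2q$, with care near $2p^2$, $4p$, $p^2q$ --- miss $n=24$: its divisors divisible by a square are $4$, $8$, $12$, all of which are exceptions (hence Cayley numbers), its remaining divisors $2,3,6$ are Cayley numbers as well, and $24=2^3\cdot 3$ is of none of your listed forms. So $24$ is a multiple of no non-Cayley number and is not settled by any construction you describe; the same care is needed for further orders hugging the exceptional $12$. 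Until the explicit families are built, their automorphism groups controlled, and these boundary orders handled, what you have is a correct outline of the known strategy, not a proof.
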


It follows that, for determining $\NC$, it is enough to consider only square-free positive integers. While the problem is yet open for the products of at least four distinct primes, there are partial results worth to mention here.
\begin{theorem}[Dobson and Spiga \cite{td-ps}]
There exists an infinite set of primes every finite product of its distinct elements is a Cayley number.
\end{theorem}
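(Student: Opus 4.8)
The theorem produces an infinite set $S$ of primes all of whose finite products of distinct elements are Cayley numbers. The plan rests on the group-theoretic reformulation of the Cayley property: the automorphism group of a vertex-transitive graph is a transitive permutation group, and such a graph is a Cayley graph precisely when this group contains a regular subgroup. Hence $n$ is a Cayley number as soon as \emph{every} transitive permutation group of degree $n$ contains a regular subgroup, and I would aim to construct $S$ so that, for every finite $I\subseteq S$, each transitive group of degree $n_I=\prod_{p\in I}p$ has this property.

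The first ingredient is the structure of primitive groups of prime degree, supplied by Burnside's theorem together with the CFSG-based classification of $2$-transitive groups: such a group is cyclic of order $p$, affine inside $A\Gamma L(1,p)$, one of $A_p,S_p$, almost simple with socle $\mathrm{PSL}(d,q)$ and $p=(q^d-1)/(q-1)$, or one of finitely many exceptions such as $M_{11},M_{23},\mathrm{PSL}(2,11)$. Every possibility except the last family already contains a regular cyclic subgroup $\Bbb{Z}_p$. I would therefore build $S=\{p_1<p_2<\cdots\}$ greedily: having chosen $p_1,\dots,p_k$, pick $p_{k+1}$ larger than all of them, avoiding every degree $(q^d-1)/(q-1)$ with $d\ge2$ and the finitely many sporadic prime degrees, imposing $p_{k+1}\not\equiv1\pmod{p_i}$ for all $i\le k$, and excluding the finitely many primes dividing the orders of the nonabelian simple groups implicated by $p_1,\dots,p_k$. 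Each restriction discards only a finite set or a set of primes of density zero, so admissible choices remain at every stage and $S$ is infinite; the congruence conditions make the chosen primes pairwise ``independent'' in the sense that none divides $p_j-1$ for another.

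The core is to prove, by induction on $|I|$, that every transitive group $G$ of degree $n=n_I$ has a regular subgroup. If $G$ is primitive, its degree is the square-free number $n$, so the O'Nan--Scott types of affine form collapse to prime degree (a regular $\Bbb{Z}_p$ being present), while the avoidance conditions are designed to exclude the remaining types; the alternating/symmetric case, when it survives for prime $n$, again contains a regular cyclic subgroup. If $G$ is imprimitive, I would pass to a block system with blocks of minimal size $a>1$, so that $G$ acts primitively on each block (of square-free degree $a$ properly dividing $n$) and transitively on the $n/a$ blocks; the inductive hypothesis governs the block action and the primitive analysis governs a block, and reassembling these into a single regular subgroup of $G$ reduces to a splitting problem that the independence congruences resolve.

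The step I expect to be the main obstacle is excluding the primitive groups of product type and of diagonal type, and more generally preventing any nonabelian simple composition factor from appearing at any level of the block structure. For these the degree is assembled from permutation degrees and orders of nonabelian simple groups, and one must verify that the \emph{distinctness} of the prime factors of $n_I$, together with the independence congruences $p_{k+1}\not\equiv1\pmod{p_i}$, genuinely forbids $n_I$ from coinciding with such a degree and forbids the stabilizer chain from failing to split off the required cyclic factors. Arranging that finitely many congruence and divisibility conditions on each new prime suffice \emph{uniformly} over all finite subsets $I$---so that the greedy construction never needs revision---is the delicate heart of the matter, and it is precisely where CFSG-based control of the prime divisors of simple-group orders is indispensable.
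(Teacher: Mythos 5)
This statement is not proved in the paper at all: it is an external result quoted verbatim from Dobson and Spiga \cite{td-ps}, so there is no in-paper argument to compare yours against; I can only assess your proposal on its own terms. As written, it is a research plan rather than a proof, and its two decisive steps are precisely the ones left open. First, your governing reduction---``$n$ is a Cayley number as soon as every transitive permutation group of degree $n$ contains a regular subgroup''---is a sufficient condition that is strictly stronger than what is true or needed, and it cannot be salvaged by conditions imposed prime by prime. For example, $\mathrm{PSL}(2,13)$ acts $3$-transitively on $14=2\cdot 7$ points and contains no regular subgroup of order $14$ (its element orders and dihedral subgroups rule one out), and the same phenomenon occurs for $\mathrm{PSL}(2,q)$ on $q+1$ points whenever $q+1$ is square-free; yet such groups are irrelevant to the Cayley property because a $2$-transitive group has $2$-closure $S_n$ and hence is never the full automorphism group of a nontrivial graph. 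Your avoidance conditions constrain the individual primes $p_i$, but the degrees that must be excluded under your reduction are the \emph{products} $n_I$, and nothing in the greedy construction prevents $\prod_{p\in I}p$ from being of the form $q+1$ or another almost simple degree. This is exactly why Dobson and Spiga formulate and prove their result for $2$-closed groups (automorphism groups of graphs), not for arbitrary transitive groups.

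Second, the imprimitive induction step---the one you describe as ``a splitting problem that the independence congruences resolve''---is the actual heart of the theorem, and no argument is offered for it. Knowing that the action on a block system of degree $n/a$ and the primitive action on a block of degree $a$ each admit regular subgroups does not produce a regular subgroup of $G$: there is no general assembly lemma of this kind. Indeed, the naive induction is provably false without substantive extra input, since all primes are Cayley numbers while $10=2\cdot5$ and $15=3\cdot5$ are non-Cayley numbers (witnessed by the Petersen graph and the Kneser graph $K(6,2)$). Thus any correct proof must convert the congruence conditions $p_j\not\equiv 1\pmod{p_i}$ and the divisibility restrictions into group-theoretic control of the stabilizer chain of a $2$-closed group of square-free degree---this is precisely the content of the Dobson--Spiga paper (building on the classification of quasiprimitive and $2$-closed groups of square-free degree), and it is the part your proposal explicitly defers. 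In summary, the proposal correctly identifies the obstacles but does not overcome either of them, and its opening reduction would have to be replaced before the rest could even be attempted.
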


As a consequence of Theorem~\ref{Cayley S++(n,k)}, the graph $S^{++}(2,k)$ that has $k(k+1)$ vertices is not a Cayley graph if $k+1$ is not a prime power.
Therefore, we obtain a new infinite class of square-free non-Cayley numbers as follows.
\begin{theorem}\label{non-Cyaley numbers}
Let $k$ be any positive integer such that $k(k+1)$ is square-free, and $k+1$ is not a prime. Then, $k(k+1)\in\NC$.
\end{theorem}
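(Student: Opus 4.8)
The plan is to assemble the final result directly from Theorem~\ref{Cayley S++(n,k)} together with the two elementary observations that $S^{++}(2,k)$ is vertex-transitive (Proposition~\ref{prop:transitive}, since $n=2$) and that it has exactly $k(k+1)$ vertices. First I would note that the graph $S^{++}(2,k)$ always exists for $k\ge 1$ and is vertex-transitive by Proposition~\ref{prop:transitive}, because the hypothesis $n\le 2$ is satisfied. Its vertex count is $k(k+1)$, which one reads off either from the construction (it is built from $k+1$ copies of $S(1,k)=K_k$) or from the vertex count $k^n+k^{n-1}=k^2+k$ appearing implicitly in the spectrum computation.

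Next I would invoke Theorem~\ref{Cayley S++(n,k)} in its contrapositive form. That theorem states that for $n=2$ and $k\ge 3$, the graph $S^{++}(2,k)$ is a Cayley graph \emph{if and only if} $k+1$ is a prime power $p^m$. So under the hypotheses of the present theorem, namely that $k+1$ is not prime, I still need to rule out the possibility that $k+1$ is a \emph{higher} prime power $p^m$ with $m\ge 2$. This is exactly where the squarefree hypothesis on $k(k+1)$ does the work: if $k+1=p^m$ with $m\ge 2$, then $p^2\mid k+1\mid k(k+1)$, so $k(k+1)$ would not be squarefree. Hence, under the two stated hypotheses, $k+1$ is neither prime nor a proper prime power, i.e.\ $k+1$ is not a prime power at all, so by Theorem~\ref{Cayley S++(n,k)} the graph $S^{++}(2,k)$ is not a Cayley graph.

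I would then assemble the conclusion: $S^{++}(2,k)$ is a vertex-transitive graph on $k(k+1)$ vertices that is not a Cayley graph, which by the very definition of a Cayley number shows that $k(k+1)\notin$ (the set of Cayley numbers), i.e.\ $k(k+1)\in\NC$. A minor point to check is that we need $k\ge 3$ so that Theorem~\ref{Cayley S++(n,k)} applies and the exceptional small cases $k\le 2$ are excluded; this is automatic, since $k=1$ gives $k(k+1)=2$ and $k=2$ gives $6=2\cdot 3$, and in the former $k+1=2$ is prime (excluded by hypothesis) while in the latter $k+1=3$ is prime (again excluded), so the hypothesis ``$k+1$ is not a prime'' forces $k\ge 3$.

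I do not expect any genuine obstacle here, as the theorem is essentially a corollary: the entire substantive content has already been established in Theorem~\ref{Cayley S++(n,k)}. The only thing requiring a moment's care is the interplay between the two hypotheses---recognizing that ``$k+1$ not prime'' alone is insufficient (one could have $k+1=p^m$, e.g.\ $k+1=4$, $k=3$, giving the Cayley graph $S^{++}(2,3)$), and that it is precisely the squarefreeness of $k(k+1)$ that upgrades ``not prime'' to ``not a prime power.'' Once this is observed, the proof is a two-line deduction.
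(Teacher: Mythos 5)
Your proof is correct and follows essentially the same route as the paper, which presents this theorem as an immediate consequence of Theorem~\ref{Cayley S++(n,k)}: the graph $S^{++}(2,k)$ is vertex-transitive with $k(k+1)$ vertices, and squarefreeness of $k(k+1)$ upgrades ``$k+1$ not prime'' to ``$k+1$ not a prime power,'' so $S^{++}(2,k)$ is not Cayley. Your explicit handling of the small cases $k\le 2$ and of the prime-power subtlety is a slightly more careful write-up of exactly the argument the paper intends.
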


As mentioned in Dobson and Spiga \cite{td-ps}, it is straightforward by making use of the group-theoretic and the number-theoretic results already available in the literature to prove that Cayley numbers have density zero in the natural numbers, and hence the density
of non-Cayley numbers is $1$. In the light of this  fact, one might wonder about the distribution of the numbers $k$ satisfying the conditions of Theorem~\ref{non-Cyaley numbers} in the set of positive integers.
The following theorem shows that for large enough $N$, more than one third of positive integers less than or equal to $N$  satisfies the conditions of
Theorem~\ref{non-Cyaley numbers}.
\begin{theorem}
The density of the set
\[\{k :  k(k+1)~\hbox{is square-free, and $k+1$ is not a prime}\}\]
is about $0.3226$
\end{theorem}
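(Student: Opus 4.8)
The plan is to compute the asymptotic density as a product of two (essentially) independent constraints and then combine them. The quantity to estimate is
\[
\delta=\lim_{N\to\infty}\frac{1}{N}\,\#\bigl\{k\le N: k(k+1)\text{ is square-free and }k+1\text{ is not prime}\bigr\}.
\]
Since $k$ and $k+1$ are coprime, $k(k+1)$ is square-free exactly when both $k$ and $k+1$ are square-free. The first step is therefore to determine the density of $k$ for which two consecutive integers are simultaneously square-free. This is a classical sieve computation: by inclusion–exclusion over the primes $p$ dividing $k$ or $k+1$, and using that $p^2\mid k$ and $p^2\mid k+1$ cannot occur together, the density factors over primes and equals
\[
\prod_{p}\Bigl(1-\frac{2}{p^{2}}\Bigr),
\]
a known constant (the ``consecutive square-free'' density) whose numerical value I would compute to the required precision.

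The second step is to impose that $k+1$ is not prime. The primes have density $0$ by the prime number theorem, so removing them changes the density of \emph{all} integers not at all; the subtlety is that we are removing primes $k+1$ from a set already thinned by the square-free condition. But a prime $k+1\ge3$ is automatically square-free, and $k=(k+1)-1$ is even, hence square-free precisely half the time on average. The contribution of the excluded primes is thus controlled by the count of primes $p\le N$ with $p-1$ square-free, which is $o(N)$ again by the prime number theorem. Hence excluding the non-prime condition removes a set of density $0$, and the density $\delta$ equals $\prod_{p}(1-2/p^{2})$ with no correction.

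Putting the two steps together, I would evaluate the Euler product numerically. Taking logarithms, $\log\prod_p(1-2/p^2)=-\sum_p\sum_{m\ge1}\frac{2^m}{m\,p^{2m}}$, which I would accelerate by expressing the $p^{-2m}$ sums in terms of values of the Riemann zeta function (or its prime-zeta analogue) to get rapid convergence, yielding the stated value $\approx0.3226$. The main obstacle is not any single step but the \emph{bookkeeping} in the second step: one must argue carefully that the prime-exclusion is negligible \emph{after} the square-free sieve rather than before, i.e. that the joint density of $\{k+1\text{ prime}\}\cap\{k,k+1\text{ both square-free}\}$ is $o(N)/N=0$; this follows because that set injects into the primes $p\le N+1$, which number $O(N/\log N)$. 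Once this negligibility is established, the density equals the consecutive-square-free constant, and the computation is routine.
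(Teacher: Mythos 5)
Your proposal is correct and follows essentially the same route as the paper: the paper obtains the density of $\{k : k(k+1)\ \hbox{square-free}\}$ as the Euler product $\prod_p\left(1-\frac{2}{p^2}\right)=2C_{\text{Feller-Tornier}}-1\approx0.3226$ (citing Ricci's theorem on square-free values of the polynomial $t(t+1)$, where $\varrho(p^2)=2$ encodes exactly your two local conditions $p^2\mid k$ and $p^2\mid k+1$), and then, just as you do, discards the condition that $k+1$ be non-prime because the primes have density zero. The only differences are cosmetic---the paper cites Ricci's theorem where you invoke the classical consecutive-square-free density, and your aside that an even $k$ is ``square-free precisely half the time'' is inaccurate (the correct proportion is $4/\pi^2$) but harmless, since your actual argument, injecting the excluded set into the primes, is what carries the step.
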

\begin{proof}
Let $f\in\Bbb{Z}[t]$ be a primitive polynomial (that is, the greatest common divisor of its coefficients is 1) without multiple roots such that its image on $\Bbb{N}$ has $k$-free greatest common divisor. Recall that a number that is not divisible by any proper $k$-th power is called $k$-free. Let $\mathcal{S}_f^k(x)$ denote the number of all positive integers $n\leq x$ such that $f(n)$ is $k$-free, and consider
\[\delta_{f,k}:=\prod_{p\ \text{prime}}\left(1-\frac{\varrho(p^k)}{p^k}\right),\]
where $\varrho(d)$ denotes the number of roots of $f$ in $\Bbb{Z}_d$. Ricci \cite{gr} (see also Pappalardi \cite{fp}) proved that
\[\mathcal{S}_f^k(x)\sim\delta_{f,k}x\]
provided that $\deg f\leq k$. Clearly, the function $f(t):=t(t+1)$ satisfies the above requirements of Ricci's theorem for $k=2$. Also, it is obvious that $\varrho(p^2)=2$ for all primes $p$. Thus, by Ricci's theorem, the density of all positive integers $k$, for which $k(k+1)$ is square-free, in the set of all positive integers, is equal to
\[\delta_{f,2}=\prod_{p\ \text{prime}}\left(1-\frac{2}{p^2}\right)=2C_{\text{Feller-Tornier}}-1\approx0.3226340989,\]
where $C_{\text{Feller-Tornier}}$ is the Feller-Tornier constant (see Finch \cite[\S 2.4.1]{srf}). Since primes have zero density in the set of all positive integers, the result follows.
\end{proof}

To date, all the numbers $n$ whose membership in $\NC$ is known are determined based on the results of \cite{mai-cep, bdm-cep1, bdm-cep2}.
Using a computer search, we see that the list of the numbers whose membership in $\NC$ are not yet determined begins with
$$9982,\ 12958,\ 18998,\ 19646,\ 20398,\ 21574,\ 24662,\ 25438,\ 25606,\ \ldots.$$
A simple computation reveals that among the numbers leass than or equal to $10^8$, there are $2763$ square-free integers of the form $k(k+1)$, with $k+1$ not a prime of which the following eight integers are new non-Cayley numbers:
$$1386506,\ 2668322,\ 15503906,\ 23985506,\ 38359442,\ 74261306,\ 89898842,\ 95912642.$$

\section*{Acknowledgments}
The authors would like to thank anonymous referees for constructive comments which led to improvement of the presentation of the paper. The second author carried this work during a Humboldt Research Fellowship at the University of Hamburg. He thanks the Alexander von Humboldt-Stiftung for financial support. 

\end{document}